\documentclass[12pt]{article}

\usepackage{amsmath}
\usepackage{amssymb}
\usepackage{amsthm}
\usepackage{latexsym}
\usepackage{cite}
\usepackage{psfrag}
\usepackage{epsfig}
\usepackage{graphicx}
\usepackage[latin1]{inputenc}

\textwidth160mm \textheight205mm \oddsidemargin0mm

\newtheorem{theorem}{Theorem}[section]

\newtheorem{conjecture}[theorem]{Conjecture}
\newtheorem{corollary}[theorem]{Corollary}
\newtheorem{lemma}[theorem]{Lemma}
\newtheorem{observation}[theorem]{Observation}
\theoremstyle{definition}

\newtheorem{remark}[theorem]{Remark}

\makeatletter \@addtoreset{equation}{section} \makeatother

\def\PG{\mathrm{PG}}
\def\AG{\mathrm{AG}}
\def\B{\mathcal{B}}
\def\C{\mathcal{C}}
\def\K{\mathcal{K}}
\def\M{\mathcal{M}}
\def\P{\mathcal{P}}
\def\U{\mathcal{U}}
\def\T{\mathcal{T}}

\begin{document}
\title{Upper bounds on the smallest size of a complete cap in $\mathrm{PG}(N,q)$, $N\ge3$, under a certain probabilistic
conjecture\thanks{
 The research of A.A.~Davydov was carried out at the IITP RAS at the expense of the Russian
Foundation for Sciences (project 14-50-00150). The research of G. Faina, S. Marcugini and F.
Pambianco was
 supported in part by Ministry for Education, University
and Research of Italy (MIUR) (Project ``Geometrie di Galois e
strutture di incidenza'')
 and by the Italian National Group for Algebraic and Geometric Structures and their Applications
 (GNSAGA - INDAM).}}

\date{}
\author{ Alexander A.
Davydov\\
{\footnotesize Institute for Information Transmission Problems
(Kharkevich
institute)}\\ {\footnotesize Russian Academy of Sciences,
 Bol'shoi Karetnyi per. 19, GSP-4, Moscow,
127994}\\{\footnotesize Russian Federation. E-mail: adav@iitp.ru}
\and Giorgio Faina, Stefano Marcugini and Fernanda Pambianco \\
{\footnotesize Dipartimento di Matematica e Informatica,
Universit\`{a}
degli Studi di Perugia, }\\
{\footnotesize Via Vanvitelli~1, Perugia, 06123, Italy.}\\
{\footnotesize E-mail:
\{giorgio.faina,stefano.marcugini,fernanda.pambianco\}@unipg.it}}

\maketitle

\begin{abstract} In the projective space $\mathrm{PG}(N,q)$ over the
Galois field of order $q$, $N\ge3$, an
iterative step-by-step construction of complete caps by adding a new point
on every step is considered.  It is proved that uncovered points
are evenly placed on the space. A natural conjecture on an estimate of the number of new
covered points on every step is done. For a part
of the iterative process, this estimate is proved rigorously.
  Under the conjecture mentioned,
new upper bounds on the smallest size
$t_{2}(N,q)$ of a complete cap in $\mathrm{PG}(N,q)$ are obtained, in particular,
\begin{align*}
t_{2}(N,q)<\frac{\sqrt{q^{N+1}}}{q-1}\left(\sqrt{(N+1)\ln q}+1\right)+2\thicksim q^\frac{N-1}{2}\sqrt{(N+1)\ln q},\quad N\ge3.
\end{align*}
A connection with
the Birthday problem is noted.  The effectiveness of the
new bounds is illustrated by comparison with
sizes of complete caps obtained by computer in wide regions of $q$.
\end{abstract}

 \textbf{Mathematics Subject Classification (2010).} Primary 51E21, 51E22; Secondary 94B05.

\textbf{Keywords.} Small complete caps, projective spaces, upper bounds
on the smallest size of a complete cap, quasi-perfect codes

\maketitle
\section{Introduction}
\label{sec_Intro}

Let $\PG(N,q)$ be the $N$-dimensional projective space over the
Galois field $\mathbb{F}_q$ of order $q$. A $k$-cap in  $\PG(N,q)$ is a
set of $k$ points no three of which are collinear.  A $k$-cap $\K$
is complete if it is not contained in a $(k+1)$-cap or, equivalently, if every point of $\PG(N,q) \setminus
\K$ is collinear with two points of $\K$.
Caps in $\PG(2,q)$ are also called arcs and they have been
widely studied by many authors in the past decades, see
\cite{BDFKMP-PIT2014,BDFKMP-JG2015-2,BDFMP-DM,BDFMP-JG2015,%
DFMP-JG2009,GiuliettiSurvey,HirsSt-old,HirsStor-2001,HirsThas-2015,KV,szoT93} and the references therein.
Let $\AG(N,q)$ be the $N$-dimensional affine space over $\mathbb{F}_q$. If $N>2$ only few constructions and bounds are known for small complete caps in $\PG(N,q)$ and $\AG(N,q)$, see
\cite{ABGP_2014_JCD,ABGP_2015_JAC,AG_2013,BDFMP-OC2013,BDKMP-arXivPG3q4q,BDKMP-PG3q4qENDM,BFG2013,BMP2015,%
BGMP-ScatCap,DFMP-JG2009,DGMP2010,DMP-JG2004,DavOst,FaPasSch_2012,GDT1991,%
GiuliettiAffin,Giulietti2007,GiuliettiSurvey,GiulPast,HirsSt-old,HirsStor-2001,HirsThas-2015,%
PS1996,Platoni,Segre,szoT93} for survey and results.

Caps have been intensively studied for
their connection with Coding Theory \cite{HirsSt-old,HirsStor-2001,LandSt}. A linear $q$-ary code with length $n$,
dimension $k$, and minimum distance~$d$ is denoted by  $[n,k,d]_{q}$. If a parity-check matrix of a linear $q$-ary code
is obtained by taking as
columns the homogeneous coordinates of the points of a cap in
$\PG(N,q)$, then the code has  minimum distance $4$ (with the exceptions of
the complete 5-cap in $\PG(3,2)$ and  11-cap in $\PG(4,3)$ giving rise to the
$[5,1,5]_{2}$
and  $[11,6,5]_{3}$  codes). Complete $n$-caps in
$\PG(N,q)$ correspond to non-extendable
 $[n, n-N-1,4]_{q}$ quasi-perfect codes of covering radius~$2$ \cite{BLP1998,CHLL1997}. If $N=2$ these codes are Minimum Distance Separable (MDS); for $N=3$ they are
Almost MDS since their Singleton defect is equal to 1.
 For fixed $N$,
the covering density of the mentioned codes decreases with decreasing $n$.
So, small complete caps have a better covering quality than the big ones.

Note also that caps are connected with quantum
codes; see e.g. \cite{BMP-quantum,Tonchev}.

In general, a central problem concerning caps is to determine
the spectrum of the possible sizes of complete caps in a given
space; see \cite{HirsSt-old,HirsStor-2001} and the references
therein. Of particular interest for applications to Coding
Theory is the lower part of the spectrum as small
complete caps  correspond to
quasi-perfect linear codes with small
covering density.

Let $t_{2}(N,q)$ be the \emph{smallest size}  of a
complete cap in $\PG(N,q)$.

A hard open problem in the study of projective
spaces is the
determination of $t_{2}(N,q)$. The exact values of $t_2(N,q)$, $N\ge3$,  are
known only for very small $q$. For instance, $t_2(3,q)$ is known
only for $q\leq 7$; see \cite[Tab. 3]{DFMP-JG2009}.

\emph{This work} is devoted to \emph{upper bounds }on
$t_{2}(N,q)$, $N\ge3$.

The trivial lower bound for $t_2(N,q)$ is $\sqrt 2 q^{\frac{N-1}{2}}$.
Constructions of complete caps whose size is close to
this lower bound are known only for the following cases: $q=2$ and $N$ arbitrary; $q=2^m>2$ and $N$ odd; $q$ is even square
\cite{BGMP-ScatCap,DFMP-JG2009,GDT1991,DGMP2010,Giulietti2007,PS1996,Segre}.
Using a modification of the approach of
\cite{KV} for the projective plane, the probabilistic upper bound $$t_2(N,q)<cq^{\frac{N-1}{2}}\log^{300} q,$$ where
$c$ is a constant independent of $q$, has been obtained in \cite{BMP2015}.
Computer assisted results
on small complete caps in $\PG(N,q)$ and $\AG(N,q)$ are given in
\cite{BDFMP-OC2013,BDKMP-arXivPG3q4q,BDKMP-PG3q4qENDM,BFG2013,DMP-JG2004,DFMP-JG2009,FaPasSch_2012,Platoni}.

The main result of the paper is given by Theorem \ref{th1_main bounds} based on Theorem~\ref{th4_mainmain}.
\begin{theorem}\label{th1_main bounds}\emph{(}\textbf{the main result}\emph{)} Let $t_{2}(N,q)$ be the \emph{smallest size}  of a
complete cap in the projective space $\PG(N,q)$. Let $D\ge1$ be a constant independent of~$q$.

\emph{\textbf{(i)}} Under Conjecture \emph{\ref{Conj_main}(i)}, in  $\PG(N,q)$, it holds that
\begin{align}\label{eq1_bnd_mainD}
t_{2}(N,q)<\frac{\sqrt{q^{N+1}}}{q-1}\left(\sqrt{D}\sqrt{(N+1)\ln q}+1\right)+2 \thicksim
\sqrt{D}q^{\frac{N-1}{2}}\sqrt{(N+1)\ln q},\quad N\ge3.
\end{align}

\emph{\textbf{(ii)}} Under Conjecture  \emph{\ref{Conj_main}(ii)}, in  $\PG(N,q)$, the bound \eqref{eq1_bnd_mainD} with $
    D=1$ holds, i.e.
\begin{align}\label{eq1_bnd_mainD=1}
t_{2}(N,q)<\frac{\sqrt{q^{N+1}}}{q-1}\left(\sqrt{(N+1)\ln q}+1\right)+2 \thicksim
q^{\frac{N-1}{2}}\sqrt{(N+1)\ln q},\quad N\ge3.
\end{align}
\end{theorem}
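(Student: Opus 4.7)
The plan is to deduce Theorem~\ref{th1_main bounds} from the iterative estimate Theorem~\ref{th4_mainmain} by solving the resulting recurrence on the number of uncovered points. Fix notation first: write $\theta_N=(q^{N+1}-1)/(q-1)$ for the number of points of $\PG(N,q)$ and, along the step-by-step construction described in the introduction, let $U_k$ denote the number of points not yet collinear with two points of the current $k$-cap; completeness is equivalent to $U_k=0$. The construction adjoins an uncovered point at each step, so the algorithm terminates with a complete cap precisely at the first $k$ where $U_k=0$.

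First I would invoke Theorem~\ref{th4_mainmain}, which under Conjecture~\ref{Conj_main}(i), respectively (ii), should supply a one-step estimate of the shape
\[
U_{k+1}\le U_k\left(1-\frac{\beta_k}{D\,\theta_N}\right),
\]
where $\beta_k$ is of order $k(q-1)$: adjoining a $(k{+}1)$-st point creates $k$ new secants, each carrying $q-1$ covering opportunities, and the conjecture asserts that these hit uncovered points with frequency essentially proportional to the uniform $U_k/\theta_N$. Part (ii) delivers the sharp constant $D=1$, while part (i) costs an extra constant $D\ge 1$.

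Next I iterate: using $1-x\le e^{-x}$ and $\sum_{i<k}\beta_i=\Theta(k^2(q-1))$, the one-step bound telescopes into
\[
U_k\le\theta_N\exp\!\left(-\frac{k^2(q-1)}{C\,D\,\theta_N}\right)
\]
for an absolute constant $C>0$ pinned down by the coefficient in Theorem~\ref{th4_mainmain}. Imposing $U_k<1$ and solving yields $k>\sqrt{C\,D\,\theta_N\ln\theta_N/(q-1)}\,(1+o(1))$. Substituting $\theta_N\sim q^{N+1}/(q-1)$ and $\ln\theta_N\sim(N+1)\ln q$ turns this into the asymptotic $\sqrt{C\,D}\,q^{(N-1)/2}\sqrt{(N+1)\ln q}$; rewriting in the closed form $\sqrt{q^{N+1}}/(q-1)\cdot\sqrt{D(N+1)\ln q}$ produces the leading term of \eqref{eq1_bnd_mainD}. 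The additive $+1$ inside the parenthesis and the $+2$ outside absorb rounding in the threshold $U_k<1$, lower-order contributions in $\ln\theta_N=\ln((q^{N+1}-1)/(q-1))$, and the cost of priming the iteration from a small initial cap; this also explains the connection to the Birthday problem announced in the abstract, since the square-root-of-$n\ln n$ behavior is exactly the coupon-collector style threshold for hitting every bin once the number of secants becomes quadratic in the cap size.

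The main obstacle is constant tracking. Theorem~\ref{th4_mainmain} must supply the coefficient $C$ with the precise value expected from the counting of new secants at each step, because any slack there propagates into the $\sqrt{D}$ factor in \eqref{eq1_bnd_mainD} and in particular ruins the sharp form of part (ii) with $D=1$. A subsidiary technical issue, explicitly flagged in the introduction, is that the ``random-like'' placement hypothesis is weakest in the first steps of the iteration, when $U_k$ is still close to $\theta_N$; this is precisely why the paper proves the estimate rigorously only for \emph{a part} of the iterative process, and the cost of starting the iteration at some later index is absorbed into the additive $+2$ term. Once those points are settled, the passage from the recurrence to \eqref{eq1_bnd_mainD} is routine inequality manipulation, and parts (i) and (ii) of the theorem are proved by the same argument, differing only in the value of $D$.
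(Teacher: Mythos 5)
Your overall strategy --- turn Conjecture \ref{Conj_main} into the one-step recurrence $U_{w+1}\le U_w\bigl(1-\frac{w(q-1)+1}{D\theta_{N,q}}\bigr)$, telescope it via $1-x<e^{-x}$ to $U_w<\theta_{N,q}\,e^{-w^2(q-1)/(2D\theta_{N,q})}$, then solve for $w$ --- is exactly the paper's route (although Theorem~\ref{th4_mainmain} is not the one-step estimate you take it to be: it is the final closed-form bound, and the recurrence comes directly from Conjecture~\ref{Conj_main} together with \eqref{eq3_Ewq}). The genuine gap is in your endgame. You terminate the iteration by imposing $U_w<1$, which forces $w\gtrsim\sqrt{2DQ\ln\theta_{N,q}}$ with $Q=\theta_{N,q}/(q-1)$. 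Since $\ln\theta_{N,q}\sim N\ln q$ (not $(N+1)\ln q$ as you write), this yields the leading term $q^{(N-1)/2}\sqrt{2DN\ln q}$, which exceeds the claimed leading term $q^{(N-1)/2}\sqrt{D(N+1)\ln q}$ by the factor $\sqrt{2N/(N+1)}>1$ for every $N\ge2$; so \eqref{eq1_bnd_mainD} does not follow from your argument. You correctly flag ``constant tracking'' as the main obstacle, but then silently drop the constant $C=2$ when passing from $\sqrt{CD}\,q^{(N-1)/2}\sqrt{(N+1)\ln q}$ to the closed form.

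The missing device is the truncated iterative process. The paper stops the iteration as soon as $U_w\le\xi$ and completes the cap by adjoining at most $\xi$ further points, giving $t_2(N,q)\le\sqrt{2DQ}\sqrt{\ln(\theta_{N,q}/\xi)}+\xi+2$; optimizing over $\xi$ and taking $\xi=\sqrt{q^{N+1}}/(q-1)$ makes $\ln(\theta_{N,q}/\xi)<\frac{N+1}{2}\ln q$, which exactly cancels the factor $2$ under the square root, while the $\xi$ points adjoined at the end account for the ``$+1$'' inside the parenthesis of \eqref{eq1_bnd_mainD}. That ``$+1$'' is therefore not, as you suggest, an absorber of rounding and priming costs: it represents a full $\sqrt{q^{N+1}}/(q-1)\sim q^{(N-1)/2}$ extra points and is the price paid for halving the logarithm. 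Without this truncation-and-optimization step your argument proves only the weaker bound with $\sqrt{2DN\ln q}$ in place of $\sqrt{D(N+1)\ln q}$.
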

\begin{conjecture}\label{conjec1}
In $\PG(N,q)$, $N\ge3$, the upper bound
\eqref{eq1_bnd_mainD=1}  holds for all $q$ without any extra conditions
and conjectures.
\end{conjecture}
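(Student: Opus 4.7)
The plan is to reduce Conjecture \ref{conjec1} to Conjecture \ref{Conj_main}(ii), since Theorem \ref{th1_main bounds}(ii) already supplies that implication. Thus the whole task becomes establishing unconditionally the per-step covering estimate that, according to the abstract, bounds the number of newly covered points at each step of the iterative construction. Let $\K_k$ be the $k$-cap produced after $k$ steps and let $U_k$ denote the set of points of $\PG(N,q)$ not on any bisecant of $\K_k$.

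First I would formalize the one-step update: appending a point $P_{k+1}\in U_k$ to $\K_k$ produces $k$ new bisecants $\langle P_{k+1},P_i\rangle$, each of which contains $q-1$ points beyond its defining pair. The quantity to control is how many of these $k(q-1)$ points actually lie in $U_k$, since only these are genuine new contributions to coverage. Exploiting the already-established ``evenly placed'' property of uncovered points, I would argue that at least one admissible choice of $P_{k+1}$ produces roughly $k(q-1)|U_k|/q^N$ freshly covered points, matching the Birthday-problem heuristic flagged in the abstract. Iterating this recursion with the initial value $|U_0|\sim q^N$ and solving the resulting difference inequality yields the target size via a standard coupon-collector type calculation, producing exactly the $\sqrt{(N+1)\ln q}$ factor in \eqref{eq1_bnd_mainD=1}.

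The hard part will be the late stage, when $|U_k|$ is small: a single point of $U_k$ that happens to be ``shielded'' from many candidate bisecants can singlehandedly force the process to overrun the target length, so averages alone are not enough. The evenly-placed result bounds first moments but not the worst-case distribution of uncovered points, and the real work lies in upgrading it to a large-deviation or second-moment statement. One plausible route is a potential-function argument, where the expected one-step decrease of $|U_k|$ is averaged over all admissible $P_{k+1}$ and convexity is used to pick a point beating the average. An alternative is a hybrid strategy: run the greedy step while $|U_k|$ exceeds a threshold (the regime in which the authors have already proved the estimate rigorously), and then switch to a tailored completion procedure that exhausts the remaining $O(q^{(N-1)/2})$ points without inflating the dominant term. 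Either way, the principal obstacle is ruling out pathological clustering of the last few uncovered points, which is precisely the regime left open in the present paper.
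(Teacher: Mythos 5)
There is a fundamental problem here: the statement you are asked to prove is Conjecture~\ref{conjec1}, which the paper itself does \emph{not} prove --- it is stated as an open conjecture, and the entire point of the paper is that the bound \eqref{eq1_bnd_mainD=1} is only established \emph{conditionally} on Conjecture~\ref{Conj_main}. Your proposal correctly identifies the reduction (Theorem~\ref{th1_main bounds}(ii) turns Conjecture~\ref{conjec1} into the task of proving Conjecture~\ref{Conj_main}(ii) unconditionally), but then it does not carry out that task; it only names candidate strategies. The paper's rigorous results (Lemma~\ref{lem6_lower bound aver}, Lemma~\ref{lem6_phi_psi}, Corollary~\ref{cor6}) establish the per-step estimate only when $U_w\ge\Phi_{w,q}(D)$ or $U_w\le\Upsilon_{w,q}(D)$; the middle regime $\Phi_{w,q}(D)>U_w>\Upsilon_{w,q}(D)$ is exactly where the Cauchy--Schwarz lower bound $\Delta_w^{\text{rigor}}$ degenerates to $1$ and the argument breaks. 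Your proposal's ``hard part'' paragraph is an accurate description of this open problem, not a resolution of it: the ``evenly placed'' Lemma~\ref{lem2_nw} is a symmetry statement about the ensemble $\mathbf{S}(U_w)$ and yields only first-moment information, and neither the convexity/potential-function idea nor a second-moment upgrade is actually executed.

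Two concrete errors beyond the missing core step. First, the averaging heuristic ``at least one choice of $P_{k+1}$ covers roughly $k(q-1)|U_k|/q^N$ new points'' is precisely Conjecture~\ref{Conj_main}(ii) restated (with $\theta_{N,q}$ in place of $q^N$ and $w(q-1)+1$ candidates rather than $k(q-1)$); asserting it from the evenness lemma is circular, because the candidates lie on a bundle of unisecants and are \emph{not} independently distributed points, as the paper itself warns in Section~\ref{sec_probabConj}. Second, the proposed hybrid strategy miscounts: when the first rigorous regime $U_w\ge\Phi_{w,q}(1)$ ends, the number of uncovered points is still a large fraction of $\theta_{N,q}$ (on the order of $q^N$ up to constants, not $O(q^{(N-1)/2})$), so no ``tailored completion procedure'' on the leftover points can preserve the main term $q^{\frac{N-1}{2}}\sqrt{(N+1)\ln q}$. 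In short, the proposal is a reasonable research plan for attacking an open problem, but it is not a proof, and there is no proof in the paper to compare it against.
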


This work can be treated as a development of the paper
\cite{BDFKMP-PIT2014}.

Some results of this work were briefly presented in \cite{BDFMP-Bulg2016ENDM}.

The paper is organized as follows. In Section \ref{sec_iterProc}, we describe the iterative step-by-step process constructing caps.
In Section \ref{sec_probabConj}, probabilities of events, that points of $\PG(N,q)$ are not covered by a running cap, are considered.
It is proved that uncovered points
are evenly placed on the space. A natural Conjecture \ref{Conj_main} on an estimate of the number of new
covered points on every step of the iterative process is done.  In Section \ref{sec_bounds}, under the conjecture of Section \ref{sec_probabConj} we give new upper bounds on
$t_{2}(N,q)$. In Section \ref{sec_effectiveness}, we illustrate the effectiveness of the new bounds comparing them with the results of computer search from the papers \cite{BDKMP-arXivPG3q4q,BDKMP-PG3q4qENDM}. A rigorous proof of Conjecture~\ref{Conj_main} for a part of the iterative process is given in Section \ref{sec_rigorous}. In Section~\ref{sec_reason}, the \emph{reasonableness of Conjecture }\ref{Conj_main} is discussed. It is shown that in the steps of the iterative process when the rigorous estimates give not good results, actually these estimates do not reflect the real
situation effectively. The reason is that the rigorous estimates assume that the number of uncovered points on unisecants is the same for all unisecants.
However, in fact, there is a dispersion of the number of uncovered points on unisecants, see Fig.\ \ref{fig_unisecant_gamma}. Moreover,  this dispersion
grows in the iterative process. In Conclusion, the obtained results are briefly discussed.

\section{An iterative step-by-step process}\label{sec_iterProc}
 Assume that in $\PG(N,q)$, $N\geq 3,$ a complete cap
is constructed by a step-by-step algorithm (\emph{Algorithm} for short)
which adds one new point to the cap in each step. As an example, we can
mention the greedy algorithm that in every step adds to the cap a point
providing the maximal possible (for the given step) number of new covered
points; see \cite%
{BDFMP-DM,BDFMP-JG2015,DFMP-JG2009,DMP-JG2004}.

Recall that a \emph{point} of $\PG(N,q)$ is \emph{covered by} a
\emph{cap} if the point lies on a bisecant of the cap, i.e.\ on a line
meeting the cap in two points. Clearly, all points of the cap are covered.

The space $\PG(N,q)$ contains
\begin{equation*}
\theta_{N,q}=\frac{q^{N+1}-1}{q-1}=q^{N}+q^{N-1}+\ldots +q+1
\end{equation*}
points.

Assume that after the $w$-th step of Algorithm, a $w$-cap
is obtained that does not cover exactly $U_{w}$ points. Let $\mathbf{S}(U_{w})$
be the set of all $w$-caps in $\PG(N,q)$ each of which does not
cover exactly $U_{w}$ points. Evidently, the group of collineations $P\Gamma
L(N+1,q)$ preserves $\mathbf{S}(U_{w})$.

Consider the $(w+1)$-st step of Algorithm. This step starts from a $w$-cap
$\K_{w}$ with $\K_{w}\in \mathbf{S}(U_{w})$.
The choice $\K_{w}$ from $\mathbf{S}(U_{w})$ can be done by distinct ways.

One way is to choose randomly a $w$-cap
of $\mathbf{S}(U_{w})$ so that for every cap of $\mathbf{S}(U_{w})$
the probability to be chosen is equal to $\frac{1}{\#\mathbf{S}(U_{w})}$.
In this case, the set $\mathbf{S}(U_{w})$ is considered as an \emph{ensemble
of random objects} with the uniform probability distribution. Anywhere where we say on probabilities
and mathematical expectations, the such random choice is supposed.

On the other side,
sometimes we study some values average or maximum
by all caps of $\mathbf{S}(U_{w})$ without a random choice. Also, we can consider
some properties that hold for all caps of $\mathbf{S}(U_{w})$.

Finally, for practice calculations (in particular, for the illustration
of investigations) we use the same cap adding to it an one point in the each step
of the iterative process.

Denote by $\U(\K)$ the set of points of $\PG(N,q)$
that are not covered by a cap~$\K$. By the definition,
$$
\#\U(\K_{w})=U_{w}.
$$
Let the cap $\K_{w}$ consist of $w$ points $A_{1},A_{2},\ldots
,A_{w}$. Let $A_{w+1}\in \U(\K_{w})$ be the point that
will be included into the cap in the $(w+1)$-st step.

\begin{remark}
\label{rem2_Aw+1} Below we introduce a few point subsets, depending on $
A_{w+1}$, for which we use the notation of the type $\M_{w}(
A_{w+1})$. Any uncovered point may be added to $\K
_{w}$.  So, there exist $U_{w}$ distinct subsets $\mathcal{
M}_{w}(A_{w+1}).$ When a particular point $A_{w+1}$ is not relevant, one may
use the short notation $\M_{w}.$ The same concerns to quantities $
\Delta _{w}(A_{w+1})$ and $\Delta _{w}$ introduced below.
\end{remark}
A point $A_{w+1}$ defines a bundle $\B(A_{w+1})$ of $w$ unisecants
 to $\K_{w}$ which are denoted as $\overline{A_{1}A_{w+1}
},\overline{A_{2}A_{w+1}},\ldots ,\overline{A_{w}A_{w+1}}$, where $\overline{A_{i}A_{w+1}}$ is the unisecant connecting $A_{w+1}$ with the cap point $A_{i}$. Every unisecant contains $q+1$ points.
Except for $
A_{1},\ldots ,A_{w}$, all the points on the unisecants in the bundle are \textbf{candidates} to
be new covered points in the $(w+1)$-st step. Denote by $\C
_{w}(A_{w+1})$ the point \emph{set of the candidates}.  By the definition,
\begin{align*}
& \mathcal{C}_{w}(A_{w+1})=\B(A_{w+1})\setminus \K_{w}, \\
& \#\mathcal{C}_{w}=w(q-1)+1.
\end{align*}
We call $\{A_{w+1}\}$ and $\B(A_{w+1})\setminus (\K_{w}\cup
\{A_{w+1}\})$, respectively, the \emph{head} and the \emph{basic part} of the bundle $\B
(A_{w+1})$.
For a given cap $\K_{w}$, in total, there
are $\#\U(\K_{w})=U_w$ distinct bundles and, respectively, $U_w$ distinct sets of the candidates.

Let $\Delta _{w}(A_{w+1})$ be the number of \textbf{new covered points} in the $
(w+1)$-st step, i.e.
\begin{equation}
\Delta _{w}(A_{w+1})=\#\U(\K_{w})-\#\U
(\K_{w}\cup
\{A_{w+1}\})=\#\{\C_{w}(A_{w+1})\cap \U(\K
_{w})\}.  \label{eq2_Delta_w}
\end{equation}

In future, we consider continuous approximations of the discrete functions
$\Delta _{w}(A_{w+1}),$ $\#\U(\K_{w}),$ $\#
\U(\K_{w}\cup
\{A_{w+1}\}),$ and some other ones keeping the same notations.

\section{Probabilities of uncovering. Conjectures
on the number of new covered points in every step
}\label{sec_probabConj}

Let $n_{w}(H)$ be the number of caps of $
\mathbf{S}(U_{w})$ that do not cover a point $H$ of $\PG(N,q)$. Each
point $H\in PG(N,q)$ will be considered as a random object that is not
covered by a randomly chosen $w$-cap $\K_{w}$ with some probability
$p_{w}(H)$ defined as
\begin{equation*}
p_{w}(H)=\frac{n_{w}(H)}{\#\mathbf{S}(U_{w})}.
\end{equation*}

\begin{lemma}
\label{lem2_nw} The value $n_{w}(H)$ is the same for all points $H\in
\PG(N,q) $.
\end{lemma}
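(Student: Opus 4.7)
The plan is to exploit transitivity of the collineation group $P\Gamma L(N+1,q)$ on the point set of $\PG(N,q)$, combined with the fact (already noted in the excerpt) that this group preserves the ensemble $\mathbf{S}(U_w)$.

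First I would fix two arbitrary points $H_1, H_2 \in \PG(N,q)$ and choose a collineation $\sigma \in P\Gamma L(N+1,q)$ with $\sigma(H_1) = H_2$; such $\sigma$ exists because $P\Gamma L(N+1,q)$ (indeed already $PGL(N+1,q)$) is transitive on points. Next I would check that $\sigma$ induces a bijection on the set of $w$-caps: since collineations send lines to lines, three collinear points map to three collinear points, so $\K$ is a $w$-cap iff $\sigma(\K)$ is a $w$-cap. Moreover, a point $P$ lies on a bisecant of $\K$ iff $\sigma(P)$ lies on a bisecant of $\sigma(\K)$, so $\#\U(\sigma(\K)) = \#\U(\K)$; in particular $\sigma$ restricts to a bijection on $\mathbf{S}(U_w)$ (this is the statement already recorded in the excerpt).

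Now define
\begin{equation*}
\mathbf{S}_w(H) = \{\K \in \mathbf{S}(U_w) : H \in \U(\K)\},
\end{equation*}
so that $n_w(H) = \#\mathbf{S}_w(H)$. The key step is to observe that, for the chosen $\sigma$, the map $\K \mapsto \sigma(\K)$ sends $\mathbf{S}_w(H_1)$ bijectively onto $\mathbf{S}_w(H_2)$: indeed $\K \in \mathbf{S}_w(H_1)$ means $\K \in \mathbf{S}(U_w)$ and $H_1$ is not covered by $\K$, and applying $\sigma$ gives $\sigma(\K) \in \mathbf{S}(U_w)$ together with $\sigma(H_1) = H_2$ not covered by $\sigma(\K)$; the inverse $\sigma^{-1}$ provides the reverse direction. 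Therefore $n_w(H_1) = n_w(H_2)$, and since $H_1, H_2$ were arbitrary the lemma follows.

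I do not anticipate any real obstacle: the argument is a direct orbit-level symmetry, and the only facts used are point-transitivity of $P\Gamma L(N+1,q)$, the preservation of the cap property and of bisecants under collineations, and the invariance of $\mathbf{S}(U_w)$ already stated in the excerpt. If anything, the only point worth being careful about is to phrase the bijection on the ensemble correctly (so that it is clear one is comparing sizes of orbits of caps avoiding $H_1$ versus $H_2$, rather than stabilizers), but this is bookkeeping rather than a genuine difficulty.
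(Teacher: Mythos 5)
Your proposal is correct and follows essentially the same route as the paper: both exploit point-transitivity of $P\Gamma L(N+1,q)$ together with its preservation of $\mathbf{S}(U_w)$ and of the covering relation. The only cosmetic difference is that you use a single collineation $\sigma$ to exhibit a direct bijection between the two subsets of caps, whereas the paper phrases the same idea as two opposite embeddings yielding $\#\mathbf{K}_{w}(H_{i})\leq \#\mathbf{K}_{w}(H_{j})$ and conversely.
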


\begin{proof}
Let $\mathbf{K}_{w}(H)\subseteq \mathbf{S}(U_{w})$ be the subset of $w$-caps
in $\mathbf{S}(U_{w})$ that do not cover~$H$. By the definition, $
n_{w}(H)=\#\mathbf{K}_{w}(H)$. Let $H_{i}$ and $H_{j}$ be two distinct
points of $\PG(N,q)$. In the group $P\Gamma L(N+1,q)$, denote by $\Psi
(H_{i},H_{j})$ the subset of collineations taking $H_{i}$ to $H_{j}$.
Clearly, $\Psi (H_{i},H_{j})$ embeds the subset $\mathbf{K}_{w}(H_{i})$ in $
\mathbf{K}_{w}(H_{j})$. Therefore, $\#\mathbf{K}_{w}(H_{i})\leq \#\mathbf{K}
_{w}(H_{j})$. Vice versa, $\Psi (H_{j},H_{i})$ embeds $\mathbf{K}_{w}(H_{j})$
into $\mathbf{K}_{w}(H_{i}),$ and we have $\#\mathbf{K}_{w}(H_{j})\leq \#
\mathbf{K}_{w}(H_{i})$. Thus, $\#\mathbf{K}_{w}(H_{i})=\#\mathbf{K}
_{w}(H_{j}),$ i.e. $n_{w}(H_{i})=n_{w}(H_{j})$.
\end{proof}

So, $n_{w}(H)$ can be considered as $n_{w}$. This means that the
\emph{probability} $p_{w}(H)$ is \emph{the same for all points} $H$; it may be
considered as
\begin{equation*}
p_{w}=\frac{n_{w}}{\#\mathbf{S}(U_{w})}.
\end{equation*}
In turn, since the probability to be uncovered is independent of a point, we
conclude that, for a $w$-cap $\K_{w}$ randomly chosen from $\mathbf{
S}(U_{w}),$ the \emph{fraction} $\#\U_{w}(\K
_{w})/\theta_{N,q}$ of uncovered points of $\PG(N,q)$ \emph{is equal to the probability} $p_{w}$ that a point of $\PG(N,q)$ is not covered.
In other words,
\begin{equation}
p_{w}=\frac{\#\U_{w}(\K_{w})}{\theta_{N,q}}=\frac{
U_{w}}{\theta_{N,q}}.  \label{eq3_p_w}
\end{equation}

Equality \eqref{eq3_p_w} can also be explained as follows. By Lemma \ref
{lem2_nw}, the multiset consisting of all points that are not covered by all
caps of $\mathbf{S}(U_{w})$ has cardinality $n_{w}\cdot \#PG(N,q)$, where $\#PG(N,q)=\theta_{N,q}$. This
cardinality can also be written as $U_{w}\cdot \#\mathbf{S}(U_{w})$.
Thus, $n_{w}\theta_{N,q}=U_{w}\cdot \#\mathbf{S}(U_{w}),$ whence
\begin{equation*}
\frac{n_{w}}{\#\mathbf{S}(U_{w})}=\frac{U_{w}}{\theta_{N,q}}.
\end{equation*}
Let $s_{w}(h)$ be the number of ones in a sequence of $h$ random and
independent 1/0 trials each of which yields 1 with the probability $p_{w}$. For
the random variable $s_{w}(h)$ we have the \emph{binomial} probability
\emph{distribution}; the \emph{expected value} of $s_{w}(h)$ is
\begin{equation}
\mathbf{E}[s_{w}(h)]=hp_{w}=h\frac{U_{w}}{\theta_{N,q}}.
\label{eq3_expect}
\end{equation}

\begin{remark}
One can consider also the \emph{hypergeometric} probability \emph{distribution}, which describes the probability of $
s_{w}^{\prime }(h)$ successes in $h$ random and independent draws without
replacement from a finite population of size $\theta_{N,q}$ containing
exactly $U_{w}$ successes.  The \emph{expected value} of $
s_{w}^{\prime }(h)$ again is
\begin{equation*}
\mathbf{E}[s_{w}^{\prime }(h)]=h\frac{U_{w}}{\theta_{N,q}}=\mathbf{E}[s_{w}(h)].
\end{equation*}

Note also that the \emph{average number } of
uncovered points among $h$ points of $\PG(N,q)$ calculated over all $\binom{
\theta_{N,q}}{h}$ combinations of $h$ points is
\begin{align*}
&\frac{1}{\binom{\theta_{N,q}}{h}}\sum\limits_{i=1}^{h}i\binom{\theta_{N,q}-U_{w}}{h-i}
\binom{U_{w}}{i}=\frac{U_{w}}{\binom{\theta_{N,q}}{h}}
\sum\limits_{i=1}^{h}\binom{\theta_{N,q}-U_{w}}{h-i}\binom{U_{w}-1
}{i-1}=
\frac{U_{w}\binom{\theta_{N,q}-1}{h-1}}{\binom{\theta_{N,q}}{h}}\\
&
=h\frac{U_{w}}{\theta_{N,q}}=\mathbf{E}[s_{w}(h)].
\end{align*}
\end{remark}

Denote by $\mathbf{E}_{w,q}$
 the \textbf{expected value} of the number of uncovered points among\linebreak  $w(q-1)+1$ \emph{randomly} taken points
in $\PG(N,q)$, if
the events to be  uncovered are  \emph{independent}. By Lemma \ref{lem2_nw}, taking into account \eqref{eq3_p_w},
\eqref{eq3_expect}, we have
\begin{equation}\label{eq3_Ewq}
\mathbf{E}_{w,q} =\mathbf{E}[s_{w}(w(q-1)+1)]=(w(q-1)+1)p_w=\frac{(w(q-1)+1)U_{w}}{
\theta_{N,q}}\,.
\end{equation}

In \eqref{eq2_Delta_w}, we defined $\Delta _{w}(A_{w+1})$ as the number of new covered points
on the $(w+1)$-st step. Since all candidates to be new covered points lie on some bundle, they
cannot be considered as randomly taken points for which the events to be  uncovered are  independent.
So, in the general case, the expected value $\mathbf{E}
[\Delta _{w}]$ is not equal to $\mathbf{E}_{w,q} $.

On the other side,  there is a large number of random factors affecting the process,
for instance, the relative positions and intersections of bisecants and unisecants.
These factors especially act for growing $q$, when the volume of the
ensemble $\mathbf{S}(U_{w})$ and the number of distinct bundles $
\B(A_{w+1})$ are relatively large. Therefore, the variance of the
random variable $\Delta _{w},$ in principle, implies the existence of
bundles $\B(A_{w+1})$ providing the inequality $\Delta _{w}(A_{w+1})>\mathbf{E}
[\Delta _{w}]$.  By these arguments (see also Section \ref{sec_reason}),
Conjecture \ref{Conj_main} seems to be reasonable and
founded.

\begin{conjecture}
\label{Conj_main}\emph{\textbf{(i)}} \emph{(}\textbf{the generalized conjecture}\emph{)}
 In $\PG(N,q),$
for $q$ large enough, in every $(w+1)$-st step of the iterative process, considered in Section~\emph{\ref{sec_iterProc}}, there exists a $w$-cap $\K
_{w}\in \mathbf{S}(U_{w})$ such that one can find an uncovered point
$A_{w+1}$ providing the inequality
\begin{align}
\Delta _{w}(A_{w+1})\geq \frac{\mathbf{E}_{w,q}}{D}=\frac{1}{D}\cdot\frac{(w(q-1)+1)U_{w}}{
\theta_{N,q}}, \label{eq3_Delta>E/D}
\end{align}
where  $D\ge1$ is a constant independent of~$q$.

  \emph{\textbf{(ii)}} \emph{(}\textbf{the basic conjecture}\emph{)} In \eqref{eq3_Delta>E/D} we have $D=1$.
\end{conjecture}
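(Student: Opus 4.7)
I would attack Conjecture~\ref{Conj_main} by a first-moment / pigeonhole argument: fix \emph{any} cap $\K_w\in\mathbf{S}(U_w)$ and average $\Delta_w(A_{w+1})$ over the $U_w$ admissible choices $A_{w+1}\in\U(\K_w)$. Pigeonhole then gives some $A_{w+1}$ attaining at least the average, so the conjecture reduces to proving
\begin{equation*}
\overline{\Delta}_w(\K_w) \;:=\; \frac{1}{U_w}\sum_{A_{w+1}\in\U(\K_w)} \Delta_w(A_{w+1}) \;\ge\; \frac{\mathbf{E}_{w,q}}{D}.
\end{equation*}
No averaging over $\mathbf{S}(U_w)$ is needed for existence; Lemma~\ref{lem2_nw} enters only through the identification $p_w=U_w/\theta_{N,q}$.

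\textbf{Main computation.} For each unisecant $L$ of $\K_w$ set $\gamma_L:=\#(L\cap\U(\K_w))$. Since two distinct uncovered points $A_{w+1},P$ span a line meeting $\K_w$ in $0$ or $1$ points (never $2$, else one endpoint would be covered), each unordered pair of uncovered points contributes to $\Delta_w$ for exactly one pointed unisecant, giving the clean identity
\begin{equation*}
\sum_{A_{w+1}\in\U(\K_w)}\Delta_w(A_{w+1}) \;=\; U_w + \sum_{L}\gamma_L(\gamma_L-1).
\end{equation*}
Every uncovered point $P$ lies on exactly $w$ unisecants (the lines $\overline{PA_i}$, $i=1,\ldots,w$), so $\sum_L \gamma_L = w U_w$, and the total number of unisecants is $M = w(\theta_{N-1,q}-w+1)$. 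Cauchy--Schwarz then yields $\sum_L \gamma_L^2 \ge (wU_w)^2/M$, whence
\begin{equation*}
\overline{\Delta}_w(\K_w) \;\ge\; \frac{wU_w}{\theta_{N-1,q}-w+1} - (w-1).
\end{equation*}
Using $\theta_{N,q}=q\theta_{N-1,q}+1$, the leading term asymptotically equals $\mathbf{E}_{w,q}$, so for any fixed $D>1$ the additive correction $w-1$ is absorbed as soon as $U_w \gtrsim \tfrac{D}{D-1}\theta_{N-1,q}$, which covers essentially the whole iteration except for its final stages. This presumably matches the partial rigorous proof promised in Section~\ref{sec_rigorous} and establishes Conjecture~\ref{Conj_main}(i) over the bulk of the process.

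\textbf{Main obstacle.} Cauchy--Schwarz is tight exactly when all $\gamma_L$ coincide, and in that tight regime the excess of the right-hand side over $\mathbf{E}_{w,q}$ is only of order $(w-1)U_w/\theta_{N,q}$, which cannot absorb the $-(w-1)$ correction unless $U_w$ is comparable to $\theta_{N,q}$ itself. Consequently, the basic conjecture $D=1$ (and the generalized conjecture near the very end of the iteration, when the cap is nearly complete) seems to demand a strict improvement over Cauchy--Schwarz, i.e.\ a quantitative lower bound on the variance of $\{\gamma_L\}_L$. This is precisely the dispersion phenomenon observed empirically in Section~\ref{sec_reason}, and I expect it to be the genuine obstacle. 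A natural but apparently inconclusive route would be to also average over $\mathbf{S}(U_w)$ and compute $\Pr[H,H'\in\U(\K_w)]$ split by the line type through $H,H'$ (external, unisecant, or bisecant), producing a second-moment identity for $\sum_L\gamma_L^2$; however, this identity does not obviously close into a recursion tracking the dispersion step-by-step through the Algorithm, which is presumably why the full statement is left as a conjecture rather than a theorem.
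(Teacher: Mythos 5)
Your plan is essentially the paper's own partial rigorous treatment in Section~\ref{sec_rigorous}: the identity $\sum_{A_{w+1}}\Delta_w(A_{w+1})=U_w+\sum_{j}\gamma_{w,j}(\gamma_{w,j}-1)$, the Cauchy--Schwarz lower bound $\Delta_w^{\text{aver}}(\K_w)\ge \frac{wU_w}{\theta_{N-1,q}+1-w}-w+1$ and the pigeonhole step are exactly \eqref{eq6_sumDeltaw}, Lemma~\ref{lem6_lower bound aver} and Corollary~\ref{cor6}, and your threshold $U_w\gtrsim\frac{D}{D-1}\theta_{N-1,q}$ is the asymptotic form of the paper's $\Phi_{w,q}(D)$ in Lemma~\ref{lem6_phi_psi}. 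You also correctly identify the dispersion of the $\gamma_{w,j}$ as the obstruction that leaves the middle range $\Phi_{w,q}(D)>U_{w}>\Upsilon_{w,q}(D)$ (and the case $D=1$) unproved, which is precisely why the paper states this as a conjecture rather than a theorem (Section~\ref{sec_reason}).
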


\section{Upper bounds on $t_2(N,q)$}\label{sec_bounds}

We denote
\begin{equation}
Q=\frac{\theta_{N,q}}{q-1}=\frac{q^{N+1}-1}{(q-1)^2}.  \label{eq4_Q}
\end{equation}
By Conjecture \ref{Conj_main},
 taking into account~\eqref{eq2_Delta_w}, \eqref{eq3_Ewq}, \eqref{eq3_Delta>E/D}, we obtain
\begin{align}
& \#\U(\K_{w}\cup\{A_{w+1}\})=\#\U(\K_{w})-\Delta _{w}(A_{w+1})
  \label{eq4_|N_w+1|} \\
&\le U_w\left( 1-\frac{w(q-1)+1}{D\theta_{N,q}}
\right) <U_w\left( 1-\frac{w(q-1)}{D\theta_{N,q}}\right) < U_w\left( 1-\frac{w}{DQ}\right).  \notag
\end{align}
Clearly, $\#\U(\K_{1})=U_1=\theta_{N,q}-1.$ Using (\ref
{eq4_|N_w+1|}) iteratively, we have
\begin{equation}
\#\U(\K_{w}\cup\{A_{w+1}\})\leq (\theta_{N,q}-1)f_{q}(w;D)<\theta_{N,q}f_{q}(w;D)  \label{eq4_N_w+1<}
\end{equation}
where
\begin{equation}
f_{q}(w;D)=\prod_{i=1}^{w}\left( 1-\frac{i}{DQ}\right) .  \label{eq4_f(w)}
\end{equation}

\begin{remark}
\label{rem2_birthday} The function $f_{q}(w;D)$ and its approximations,
including (\ref{eq4_fqw_approx}), appear in distinct
tasks of Probability Theory, e.g. in the \emph{Birthday problem} (or the
Birthday paradox) \cite{BrinkBirthday,BirthMajor,BirthRevis}. Really,
let the year contain $DQ$ days and let all birthdays occur with the same
probability. Then $P_{DQ}^{\neq }(w+1)=f_{q}(w;D)$ where $P_{DQ}^{\neq }(w+1)$
is the probability that no two persons from $w+1$ random persons have the
same birthday. Moreover, if birthdays occur with different probabilities we
have $P_{DQ}^{\neq }(w+1)<f_{q}(w;D)$ \cite{BirthMajor}.
\end{remark}

In further, we consider a \emph{truncated iterative process}. The iterative process ends when $\#\U(\K_{w}\cup\{A_{w+1}\})\leq \xi $ where $\xi
\geq 1$ is some value chosen to improve estimates. Then a few (at most $\xi $
) points are added to $\K_{w}$ in order to get a complete $k$-cap.
The size $k$ of an obtained complete cap is as follows:
\begin{equation}
w+1\leq k\leq w+1+\xi \text{ under condition }\#\U(\K_{w}\cup\{A_{w+1}\})\leq \xi .
\label{eq4_k=w+1+xi}
\end{equation}

\begin{theorem}
\label{th4_main} Let $f_{q}(w;D)$ be as in
\eqref{eq4_f(w)}. Let $\xi$ be a constant independent of $w$ with $\xi \geq 1$. Under Conjecture
 \emph{\ref{Conj_main}}, in $\PG(N,q)$ it holds
that
\begin{equation}
t_{2}(N,q)\leq w+1+\xi  \label{eq4_general bound}
\end{equation}
where the value $w$ satisfies the
 inequality
\begin{equation}
f_{q}(w;D)\leq \frac{\xi }{\theta_{N,q}}.  \label{eq4_main}
\end{equation}
\end{theorem}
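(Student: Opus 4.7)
The plan is to combine the iterative bound \eqref{eq4_N_w+1<} already derived in the excerpt with a short mop-up phase. The iterative phase, driven by Conjecture \ref{Conj_main}, produces a cap leaving at most $\xi$ points uncovered; the mop-up phase then completes it by adjoining at most $\xi$ more points, yielding the bound \eqref{eq4_general bound}.

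For the iterative phase, I argue by induction on $w$, starting from an arbitrary $1$-cap (so $U_1=\theta_{N,q}-1$). Suppose inductively that at step $w$ we have produced a cap with $U_w$ uncovered points. By Conjecture \ref{Conj_main} there exist a $w$-cap $\K_w\in\mathbf{S}(U_w)$ and an uncovered point $A_{w+1}$ with $\Delta_w(A_{w+1})\ge\mathbf{E}_{w,q}/D$; switching to $\K_w$ if necessary and adjoining $A_{w+1}$, estimate \eqref{eq4_|N_w+1|} gives the one-step contraction $\#\U(\K_w\cup\{A_{w+1}\})<U_w(1-w/(DQ))$. Composing this over $i=1,\ldots,w$ reproduces the product bound
\begin{equation*}
\#\U(\K_w\cup\{A_{w+1}\})<\theta_{N,q}\,f_q(w;D)
\end{equation*}
of \eqref{eq4_N_w+1<}.

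I then stop at the smallest $w$ satisfying \eqref{eq4_main}, that is $f_q(w;D)\le\xi/\theta_{N,q}$, so that the $(w+1)$-cap $\K_w\cup\{A_{w+1}\}$ leaves at most $\xi$ points of $\PG(N,q)$ uncovered. The mop-up consists in adjoining the remaining uncovered points one at a time. The elementary observation driving this phase is that a point uncovered by the current cap lies on no bisecant of it, so joining such a point to any two cap points never yields a collinear triple and the enlarged set is again a cap. After at most $\xi$ such insertions every point is covered, the cap is complete, and its total size is at most $w+1+\xi$, which is \eqref{eq4_general bound}.

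I do not foresee a serious obstacle: once Conjecture \ref{Conj_main} and the telescoping bound \eqref{eq4_N_w+1<} are granted, the theorem reduces to the choice of stopping rule and the observation that uncovered points may always be safely adjoined. The only subtlety worth flagging is the existential quantifier in the conjecture: at each step one may have to switch to a different cap in $\mathbf{S}(U_w)$ rather than continue from the cap produced in the previous step. This is fully consistent with the framework of Section \ref{sec_iterProc}, which treats the algorithm as a walk through the ensembles $\mathbf{S}(U_w)$, so repeated invocation of the conjecture is legitimate.
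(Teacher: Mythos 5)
Your proposal is correct and follows essentially the same route as the paper: invoke the telescoped bound \eqref{eq4_N_w+1<} to find $w$ with $\theta_{N,q}f_q(w;D)\le\xi$, then complete the cap by adjoining the at most $\xi$ remaining uncovered points as in \eqref{eq4_k=w+1+xi}. The extra details you supply --- the explicit induction, the observation that an uncovered point can always be safely adjoined, and the remark that the existential quantifier in Conjecture \ref{Conj_main} may force a switch of caps within $\mathbf{S}(U_w)$ at each step --- are all consistent with, and merely make explicit, what the paper's one-line proof takes for granted.
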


\begin{proof}
By \eqref{eq4_N_w+1<}, to provide the inequality $\#\U(\K_{w}\cup\{A_{w+1}\})\leq
\xi $ it is sufficient to find $w$ such that $\theta_{N,q}f_{q}(w;D)\leq \xi $.
Now \eqref{eq4_general
bound} follows from \eqref{eq4_k=w+1+xi}.
\end{proof}

We find an upper bound on the smallest possible solution of inequality (\ref
{eq4_main}).

The Taylor series of $e^{-\alpha }$ implies $
1-\alpha <e^{-\alpha }\mbox{ for }\alpha \neq 0$, whence
\begin{align}
&\prod_{i=1}^{w}\left( 1-\frac{i}{DQ}\right) <\prod_{i=1}^{w}e^{-i/DQ}=e^{-(w^{2}+w)/2DQ}<e^{-w^{2}/2DQ}.\label{eq4_fqw_approx}
\end{align}

\begin{lemma}
\label{lem2_basic}  Let $\xi$ be a constant independent of $w$ with $\xi \geq 1$. The value
\begin{equation}
w\geq \sqrt{2DQ}\sqrt{\ln \frac{\theta_{N,q}}{\xi }}+1  \label{eq4_w<}
\end{equation}
satisfies the   inequality \eqref{eq4_main}.
\end{lemma}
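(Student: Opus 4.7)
The plan is to directly invert the exponential bound for the product $f_q(w;D)$ that was already established in \eqref{eq4_fqw_approx}, and then solve the resulting exponential inequality for $w$.

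First I would note that the chain displayed in \eqref{eq4_fqw_approx} already gives
\[
f_q(w;D) \;=\; \prod_{i=1}^{w}\left(1-\frac{i}{DQ}\right) \;<\; e^{-w^2/(2DQ)},
\]
so it is enough to exhibit $w$ such that $e^{-w^2/(2DQ)} \le \xi/\theta_{N,q}$; this implies \eqref{eq4_main}.

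Next I would take the natural logarithm of both sides. The requirement $e^{-w^2/(2DQ)} \le \xi/\theta_{N,q}$ is equivalent to
\[
\frac{w^2}{2DQ} \;\ge\; \ln\!\frac{\theta_{N,q}}{\xi},
\]
that is, $w \ge \sqrt{2DQ\,\ln(\theta_{N,q}/\xi)} \;=\; \sqrt{2DQ}\sqrt{\ln(\theta_{N,q}/\xi)}$. This is a legitimate target because, for all $q$ and $N\ge3$ of interest, $\theta_{N,q}/\xi>1$ so the logarithm is positive; also $\xi\ge 1$ by hypothesis.

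Finally, since $w$ counts steps of the iterative process and must be an integer, while the threshold $\sqrt{2DQ}\sqrt{\ln(\theta_{N,q}/\xi)}$ need not be, the extra ``$+1$'' in \eqref{eq4_w<} simply absorbs the rounding and turns the strict inequality $f_q(w;D)<\xi/\theta_{N,q}$ into the non-strict \eqref{eq4_main}. There is essentially no obstacle: the lemma is a direct algebraic consequence of \eqref{eq4_fqw_approx}, and the only care needed is in passing from the (strict) exponential approximation and a real-valued solution to an integer $w$ satisfying the required non-strict bound.
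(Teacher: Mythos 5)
Your proposal is correct and follows essentially the same route as the paper: both invoke the strict bound $f_q(w;D)<e^{-w^2/(2DQ)}$ from \eqref{eq4_fqw_approx}, reduce \eqref{eq4_main} to $e^{-w^2/(2DQ)}\leq \xi/\theta_{N,q}$, solve for $w$ by taking logarithms, and account for the added $1$ by the integrality of $w$. No substantive differences.
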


\begin{proof}
By (\ref{eq4_f(w)}),(\ref{eq4_fqw_approx}), to provide (\ref{eq4_main}) it
is sufficient to find $w$ such that
\begin{equation*}
e^{-w^{2}/2DQ}\leq \frac{\xi }{\theta_{N,q}}.
\end{equation*}
As $w$ should be an integer, in \eqref{eq4_w<} one is added.
\end{proof}

\begin{theorem}
\label{th4_general bound_converse} Let $D\ge1$ be a constant independent of~$q$. Under Conjecture \emph{\ref{Conj_main}(i)}, in $
PG(N,q)$ it holds that
\begin{equation}
t_{2}(N,q)\leq \sqrt{2DQ}\sqrt{\ln \frac{
\theta_{N,q}}{\xi }}+\xi+2 ,~~\xi \geq 1,  \label{eq4_gen bound}
\end{equation}
where $\xi $ is an arbitrarily chosen constant independent of $w$.
\end{theorem}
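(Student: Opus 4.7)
The proof is a direct synthesis of the two results immediately preceding the theorem, namely Theorem~\ref{th4_main} and Lemma~\ref{lem2_basic}. My plan is to invoke Theorem~\ref{th4_main} as the structural reduction---it converts the task of bounding $t_2(N,q)$ into the task of producing any integer $w$ for which $f_q(w;D) \leq \xi/\theta_{N,q}$---and then to supply such a $w$ in explicit closed form via Lemma~\ref{lem2_basic}.

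First I would fix the arbitrary constant $\xi \geq 1$ from the statement. Lemma~\ref{lem2_basic} then guarantees that the integer
\[
w_0 = \left\lceil \sqrt{2DQ}\sqrt{\ln(\theta_{N,q}/\xi)} \right\rceil + 1
\]
satisfies $f_q(w_0;D) \leq \xi/\theta_{N,q}$; the ``$+1$'' in \eqref{eq4_w<} is already inserted precisely to absorb the integer rounding inherent in choosing $w$. Next, under Conjecture~\ref{Conj_main}(i), Theorem~\ref{th4_main} applied to this $w_0$ yields $t_2(N,q) \leq w_0 + 1 + \xi$. Substituting the explicit value of $w_0$ and bounding the ceiling by its argument plus $1$ gives
\[
t_2(N,q) \leq \sqrt{2DQ}\sqrt{\ln(\theta_{N,q}/\xi)} + 1 + 1 + \xi,
\]
which rearranges to the claimed bound \eqref{eq4_gen bound}.

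In this derivation no genuine obstacle arises: the probabilistic and combinatorial work has already been carried out in the earlier sections, specifically the iteration of inequality \eqref{eq4_|N_w+1|} on $\#\U(\K_w)$ to produce $f_q(w;D)$, together with the analytic estimate $1-\alpha < e^{-\alpha}$ that controls the resulting product. The only point to watch is the treatment of $w$ as an integer, and the constant summand $+2$ in the final bound comfortably accommodates both the rounding in Lemma~\ref{lem2_basic} and the extra ``$+1$'' that Theorem~\ref{th4_main} contributes when converting from $w$ to a complete cap of size at most $w+1+\xi$.
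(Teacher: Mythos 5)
Your proposal takes exactly the paper's route: the paper's entire proof of Theorem~\ref{th4_general bound_converse} is the one-line observation that the assertion follows from \eqref{eq4_general bound} and \eqref{eq4_w<}, i.e.\ from Theorem~\ref{th4_main} combined with Lemma~\ref{lem2_basic}, which is precisely your synthesis. The only defect is a small double count in the rounding. Writing $x=\sqrt{2DQ}\sqrt{\ln(\theta_{N,q}/\xi)}$, your choice $w_0=\lceil x\rceil+1$ leads to $t_2(N,q)\le w_0+1+\xi\le (x+1)+1+1+\xi=x+\xi+3$, which overshoots the claimed constant $+2$: the ``$+1$'' appearing in \eqref{eq4_w<} \emph{is} the integrality slack, so it must not be added on top of a ceiling. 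The correct bookkeeping is to take $w_0=\lceil x\rceil$; by the proof of Lemma~\ref{lem2_basic} (via \eqref{eq4_fqw_approx}) any integer $w_0\ge x$ already satisfies $e^{-w_0^2/2DQ}\le\xi/\theta_{N,q}$ and hence \eqref{eq4_main}, while $w_0\le x+1$, so Theorem~\ref{th4_main} gives $t_2(N,q)\le w_0+1+\xi\le x+\xi+2$ as required. With that one-line repair your argument coincides with the paper's.
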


\begin{proof}
The assertion follows from \eqref{eq4_general bound} and \eqref{eq4_w<}.
\end{proof}

We should choose $\xi $ so  to obtain a relatively small value in the
right part of~\eqref{eq4_gen bound}.
 We consider
the function of $\xi$ of the form
\begin{equation*}
\phi(\xi)= \sqrt{2DQ}\sqrt{\ln \frac{
\theta_{N,q}}{\xi }}+\xi+2.
\end{equation*}
Its derivative by $\xi$ is
\begin{equation*}
\phi'(\xi)=1-\frac{1}{\xi}\sqrt{\frac{DQ}{2\ln\frac{\theta_{N,q}}{\xi }}}\,\,.  \label{eq2_der phi}
\end{equation*}
Put $\phi'(\xi)=0$. Then
\begin{align} \label{eq4_xi^2}
    \xi^2=\frac{DQ}{2\ln\theta_{N,q}-2\ln \xi}=\frac{D\theta_{N,q}}{2(q-1)(\ln\theta_{N,q}-\ln \xi)}\,\,.
\end{align}
We find $\xi$ in the form
$
\xi=\sqrt{\frac{\theta_{N,q}}{c\ln\theta_{N,q}}}\,\,.$
 By \eqref{eq4_xi^2},
 \begin{align*}
 c=\frac{q-1}{D\ln\theta_{N,q}}(\ln\theta_{N,q}+\ln c+\ln\ln\theta_{N,q})=
   \frac{q-1}{D}\left(1+\frac{\ln c+\ln\ln \theta_{N,q}}{\ln \theta_{N,q}}\right).
 \end{align*}
So, for growing $q$ one could take
\begin{align*}
   c=\frac{q-1}{D},\quad \xi=\sqrt{\frac{D\theta_{N,q}}{(q-1)\ln\theta_{N,q}}}=\sqrt{\frac{D(q^{N+1}-1)}{(q-1)^2\ln\theta_{N,q}}}\,\,.
\end{align*}

For simplicity of the presentation,
we put
\begin{equation}
\xi =\frac{\sqrt{q^{N+1}}}{q-1}\,.  \label{eq4_xi good}
\end{equation}

\begin{theorem}\label{th4_mainmain}  Let $D\ge1$ be a constant independent of~$q$.
 Under Conjecture \emph{\ref{Conj_main}(i)}, the following upper
bound on the smallest size $t_{2}(N,q)$ of a complete cap in $\PG(N,q)$, $N\ge3$,
holds:
\begin{align}\label{eq4_bnd_main}
&t_{2}(N,q)<\frac{\sqrt{q^{N+1}}}{q-1}\left(\sqrt{D}\sqrt{(N+1)\ln q}+1\right)+2\thicksim\sqrt{D}q^{\frac{N-1}{2}}\sqrt{(N+1)\ln q}.
\end{align}
\end{theorem}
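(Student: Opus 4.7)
The strategy is to invoke Theorem \ref{th4_general bound_converse} with the specific choice $\xi=\sqrt{q^{N+1}}/(q-1)$ proposed in \eqref{eq4_xi good} and to carry out the resulting algebraic simplification. First I would verify the hypothesis $\xi\geq 1$: for $N\ge 3$ and $q\ge 2$ this is immediate from $\sqrt{q^{N+1}}\ge q^{2}>q-1$, so Theorem \ref{th4_general bound_converse} applies and yields
\[
t_{2}(N,q)\le \sqrt{2DQ}\,\sqrt{\ln(\theta_{N,q}/\xi)}+\xi+2.
\]

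The main work is to bound the right-hand side from above. The heart of the argument is the estimate
\[
\frac{\theta_{N,q}}{\xi}=\frac{q^{N+1}-1}{\sqrt{q^{N+1}}}<\sqrt{q^{N+1}}=q^{(N+1)/2},
\]
which yields $\ln(\theta_{N,q}/\xi)<\tfrac{N+1}{2}\ln q$. Combining this with $Q=(q^{N+1}-1)/(q-1)^2<q^{N+1}/(q-1)^2$ from \eqref{eq4_Q}, I obtain the strict inequality
\[
\sqrt{2DQ}\,\sqrt{\ln(\theta_{N,q}/\xi)}<\sqrt{\frac{2D\,q^{N+1}}{(q-1)^{2}}\cdot\frac{N+1}{2}\ln q}=\frac{\sqrt{q^{N+1}}}{q-1}\,\sqrt{D(N+1)\ln q}.
\]

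Adding back the remaining term $\xi=\sqrt{q^{N+1}}/(q-1)$ and factoring $\sqrt{q^{N+1}}/(q-1)$ out of the two main contributions produces exactly the displayed bound \eqref{eq4_bnd_main}, while the asymptotic equivalence follows from $\sqrt{q^{N+1}}/(q-1)\thicksim q^{(N-1)/2}$ as $q\to\infty$. Since every step reduces to an elementary inequality once the optimized $\xi$ is fixed, I do not anticipate any genuine obstacle: the real conceptual work has already been done in deriving \eqref{eq4_xi good} by solving $\phi'(\xi)=0$, which is where the logarithmic factor $\sqrt{(N+1)\ln q}$ is produced by balancing the $\sqrt{\ln(\theta_{N,q}/\xi)}$ and $\xi$ contributions of the general bound \eqref{eq4_gen bound}.
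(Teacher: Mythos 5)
Your proposal is correct and follows essentially the same route as the paper: substitute the choice $\xi=\sqrt{q^{N+1}}/(q-1)$ into the general bound of Theorem~\ref{th4_general bound_converse} and simplify using $\theta_{N,q}/\xi=(q^{N+1}-1)/\sqrt{q^{N+1}}<q^{(N+1)/2}$ and $q^{N+1}-1<q^{N+1}$. The paper's proof is just a more compressed version of the same computation, so there is nothing further to add.
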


\begin{proof}
In (\ref{eq4_gen bound}), we take $Q$ and $\xi$ from \eqref{eq4_Q} and \eqref{eq4_xi good} and obtain
\begin{equation*}
t_{2}(N,q)< \sqrt{2D\frac{q^{N+1}-1}{(q-1)^2}\cdot\ln\frac{\frac{q^{N+1}-1}{q-1}}{\frac{q^{\frac{N+1}{2}}}{q-1}} }
+\frac{\sqrt{q^{N+1}}}{q-1}+2
\end{equation*}
whence  the relation \eqref{eq4_bnd_main} follows directly as $q^{N+1}-1<q^{N+1}$.
\end{proof}

From Theorem \ref{th4_mainmain} we obtain Theorem \ref{th1_main bounds}.

\section{Illustration of the effectiveness of the new bounds}\label{sec_effectiveness}
In the works \cite{BDKMP-arXivPG3q4q,BDKMP-PG3q4qENDM}, for $\PG(N,q)$,  $N=3,4$, $q\in L_N$, complete caps are obtained by computer search.  Here
\begin{align*}
&L_{3}:=\{q\le 4673, ~q\  \textrm{prime}\}  \cup \{5003, 6007, 7001, 8009\},\\
&L_{4}:=\{q\le 1361, ~q\  \textrm{prime}\}\cup \{1409\} .
\end{align*}
\emph{All obtained complete caps \textbf{satisfy} bound} \eqref{eq4_bnd_main} \emph{with} D = 1 (equivalently, \emph{bound} \eqref{eq1_bnd_mainD=1}).

Let $\overline{t}_{2}(N,q)$ be the smallest known size of
complete caps in $\PG(N,q)$; these sizes can be found in  \cite{BDKMP-arXivPG3q4q}.

In  Fig.\,\ref{fig_3_4q_1} we compare the upper bound of \eqref{eq1_bnd_mainD=1}
 with the sizes $\overline{t}_{2}(N,q)$. The top dashed-dotted red curve, corresponding to the bound of \eqref{eq1_bnd_mainD=1}, is \emph{\textbf{strictly higher}} than
the bottom black curve $\overline{t}_{2}(N,q)$.
\begin{figure}[htbp]
\includegraphics[width=\textwidth]{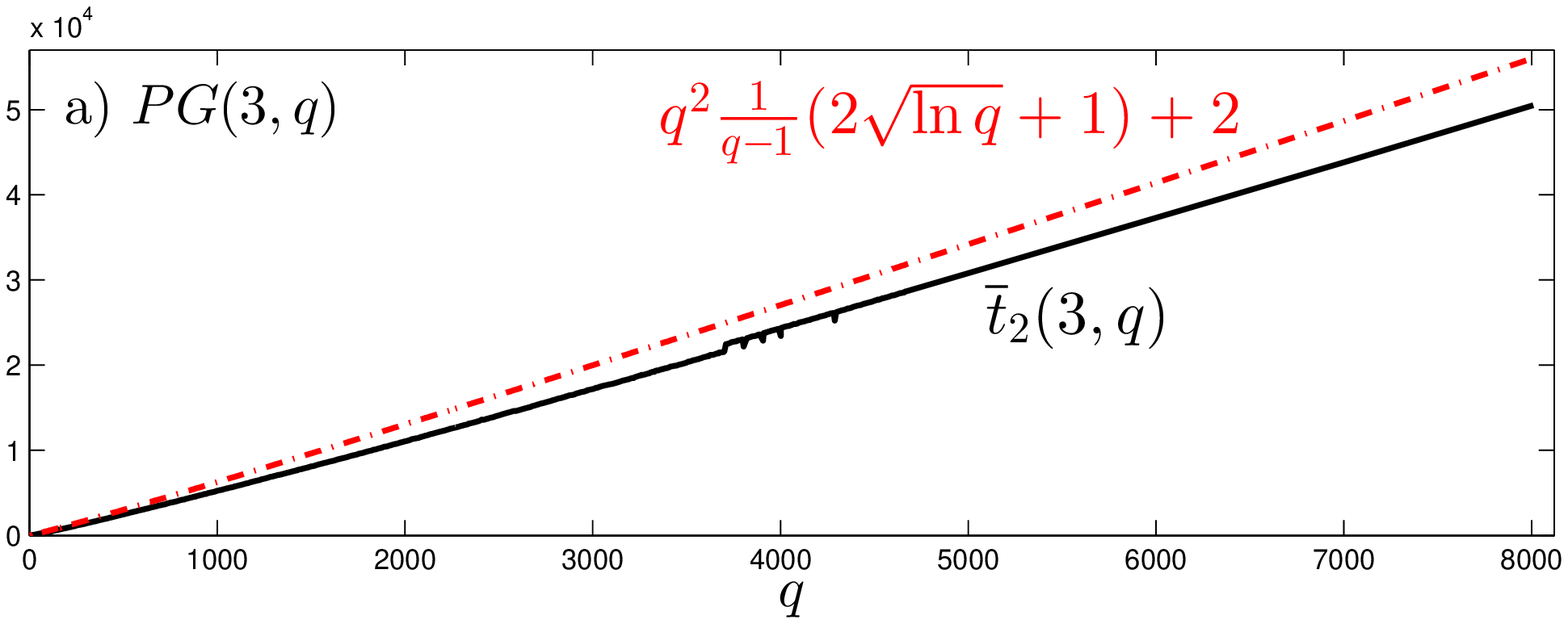}
\includegraphics[width=\textwidth]{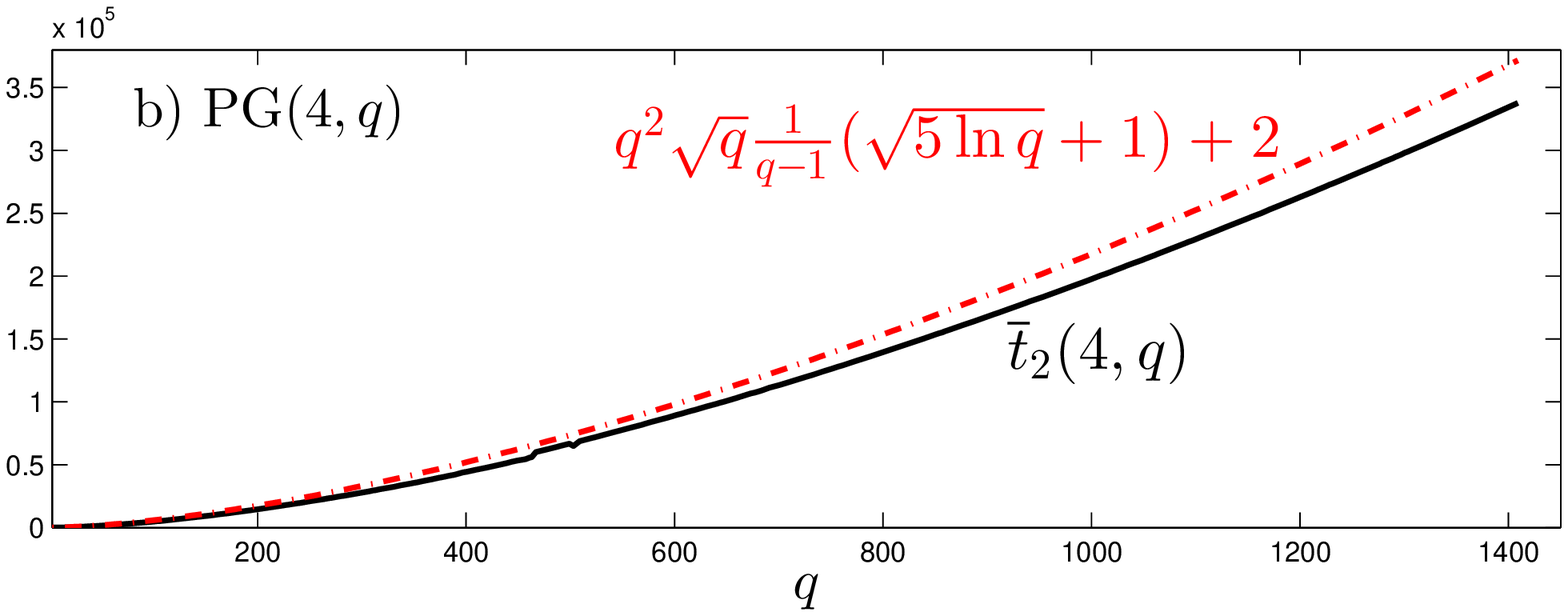}
\caption{\textbf{Bound}  $t_{2}(N,q)<\frac{\sqrt{q^{N+1}}}{q-1}\left(\sqrt{(N+1)\ln q}+1\right)+2$  (\emph{top dashed-dotted red curve}) \textbf{vs the smallest known sizes} $\overline{t}_{2}(N,q)$ of complete caps ,
$q\in L_{N}$, $N=3,4$ (\emph{bottom black curve}). a) $\PG(3,q)$
b) $\PG(4,q)$
 }
\label{fig_3_4q_1}
\end{figure}

\section{A rigorous proof of Conjecture \ref{Conj_main} for a part of the iterative process}
\label{sec_rigorous}

In further, we take into account that \emph{all points that are not
covered} by a cap \emph{lie on unisecants} to the cap.

In total there are  $\theta_{N-1,q}$ lines through every point of $
\PG(N,q).$ Therefore, through
every point $A_i$ of $\K_{w}$ there is a pencil $\P(A_i)$ of $\theta_{N-1,q}-(w-1)$ unisecants to $\K_{w}$, where $i=1,2,\ldots,w$. The total number $T_{w}^{\Sigma}$ of the unisecants to $\K_{w}$ is
\begin{equation}
T_{w}^{\Sigma }=w(\theta_{N-1,q}+1-w).  \label{eq6_total unisecants}
\end{equation}
Let $\gamma _{w,j}$ be \smallskip the number of uncovered points on the $j$-th
unisecant $\T_{j}$, $j=1,2,\ldots ,T_{w}^{\Sigma }.$

\begin{observation}\label{observ}
Every unisecant to $\K_w$ belongs to one and only one pencil $\P(A_i)$, $i\in\{1,2,\ldots,w\}$. Every uncovered point belongs to
one and only one unisecant from every pencil $\P(A_i)$, $i=1,2,\ldots,w$. Every
uncovered point $A$ lies on exactly $w$ unisecants which form the bundle
$\B(A)$ with the head $\{A\}$. All unisecants from the same bundle belong to distinct pencils. A unisecant $\T_{j}$ belongs to $\gamma _{w,j}$ distinct bundles.
\end{observation}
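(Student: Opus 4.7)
The proof is essentially a sequence of elementary consequences of the definitions of unisecant, bundle, and pencil, together with the defining property of a cap (no three collinear points). The plan is to verify the five assertions in the order they are stated, and most steps reduce to a single sentence.

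First I would handle the statement about pencils. By definition a unisecant to $\K_w$ meets the cap in exactly one point $A_i$; hence it belongs to the pencil $\P(A_i)$. It cannot belong to any other $\P(A_j)$ with $j\neq i$, since then it would contain both $A_i$ and $A_j$ and would be a bisecant. This immediately gives the first assertion, and also the penultimate assertion: the $w$ unisecants forming the bundle $\B(A)$ pass through the $w$ distinct cap points $A_1,\dots,A_w$, so they lie in $w$ distinct pencils.

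Next I would verify the claim about uncovered points. Fix an uncovered point $A$ and a cap point $A_i$. The line $\ell=\overline{AA_i}$ meets $\K_w$ in $A_i$; it cannot meet $\K_w$ in any other cap point, because otherwise $\ell$ would be a bisecant and $A\in\ell$ would be covered, contradicting $A\in\U(\K_w)$. Hence $\ell$ is a unisecant belonging to $\P(A_i)$, and it is the unique line through $A$ in this pencil since two distinct points determine a unique line. Ranging $i$ over $\{1,\dots,w\}$ yields exactly $w$ unisecants through $A$, one from each pencil, and these are precisely the elements of $\B(A)$ (the head $\{A\}$ together with the basic part sitting inside the union). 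Conversely, any unisecant through $A$ must meet $\K_w$ in some cap point $A_i$ and is therefore $\overline{AA_i}$, so there are no other unisecants through $A$.

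Finally, for the counting statement about $\T_j$, I would argue as follows. A unisecant $\T_j$ belongs to a bundle $\B(A)$ if and only if $A$ is an uncovered point lying on $\T_j$: indeed, if $A\in\U(\K_w)\cap\T_j$, then by the previous paragraph $\T_j$ (being a unisecant through $A$) is one of the $w$ lines of $\B(A)$; conversely, if $\T_j\in\B(A)$, then $A$ is the head of $\B(A)$, hence uncovered, and lies on $\T_j$. Since distinct uncovered points produce distinct bundles (the bundle determines its head), the number of bundles containing $\T_j$ equals $\#(\U(\K_w)\cap\T_j)=\gamma_{w,j}$.

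Since every step is a direct application of the definitions and of the ``no three collinear'' property, there is no real obstacle; the only thing to be careful about is wording the uniqueness clauses precisely so the reader sees why the bundle is well-defined and why the assignment ``uncovered point on $\T_j$'' $\leftrightarrow$ ``bundle containing $\T_j$'' is a bijection.
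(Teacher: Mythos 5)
Your proof is correct and is exactly the intended justification: the paper states this as an \emph{Observation} with no proof at all, treating each claim as an immediate consequence of the definitions and the ``no three collinear'' property, and the one-line reason it does supply later for the final claim (``every uncovered point on $\T_j$ may be the head of a bundle'') coincides with your bijection argument. The only microscopic caveat is your parenthetical ``the bundle determines its head,'' which strictly needs $w\ge2$ (for $w=1$ two uncovered points on the single unisecant yield the same line set), but since bundles are by definition indexed by their heads this does not affect the count $\gamma_{w,j}$.
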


 Every
uncovered point lies on exactly $w$ unisecants; due to this \emph{multiplicity},
on all unisecants there are in total $\Gamma _{w}^{\Sigma }$ uncovered
points, where
\begin{equation}
\Gamma _{w}^{\Sigma }=\sum_{j=1}^{T_{w}^{\Sigma }}\gamma _{w,j}=wU_{w}.
\label{eq6_Gamma}
\end{equation}
By \eqref{eq6_total unisecants}, \eqref{eq6_Gamma},
the average number $\gamma_{w}^{\text{aver}}$  of uncovered points on a unisecant is
\begin{equation}\label{eq6_aver unisecants}
\gamma_{w}^{\text{aver}}=\frac{\Gamma _{w}^{\Sigma }}{T_{w}^{\Sigma }}=\frac{U_{w}}{\theta_{N-1,q}+1-w}.
\end{equation}

A \emph{unisecant $\T_{j}$ belongs to $\gamma _{w,j}$ distinct bundles}, as every
uncovered point on $\T_{j}$ may be the head of a bundle. Moreover, $
\T_{j}$ provides $\gamma _{w,j}(\gamma _{w,j}-1)$ uncovered
points to the basic parts of all these bundles. The noted points are counted with
\emph{multiplicity}.

\emph{Taking into account the
multiplicity}, in all $U_{w}$ the bundles there are
\begin{equation}\label{eq6_sumDeltaw}
\sum\limits_{A_{w+1}}\Delta
_{w}(A_{w+1})=U_{w}+\sum\limits_{j=1}^{T_{w}^{\Sigma }}\gamma _{w,j}(\gamma
_{w,j}-1)
\end{equation}
uncovered points, where $U_{w}$ is the total numbers of all the heads. By \eqref{eq6_Gamma}, \eqref{eq6_sumDeltaw},
\begin{equation*}
\sum\limits_{A_{w+1}}\Delta
_{w}(A_{w+1})=U_{w}+\sum\limits_{j=1}^{T_{w}^\Sigma}\gamma _{w,j}^{2}-\sum\limits_{j=1}^{T_{w}^\Sigma}\gamma_{w,j}=
U_{w}(1-w)+\sum\limits_{j=1}^{T_{w}^\Sigma}\gamma _{w,j}^{2}.
\end{equation*}

For a cap $\K_{w}$, we denote by $\Delta _{w}^{\text{aver}}(\K
_{w})$ the average value of $\Delta _{w}(A_{w+1})$ by all $\#\U(
\K_{w})$ uncovered points $A_{w+1}$, i.e.
\begin{equation}
\Delta _{w}^{\text{aver}}(\K_{w})=\frac{\sum\limits_{A_{w+1}}\Delta
_{w}(A_{w+1})}{\#\U(\K_{w})}=\frac{\sum\limits_{A_{w+1}}\Delta
_{w}(A_{w+1})}{U_{w}}=
\frac{\sum\limits_{j=1}^{T_{w}^\Sigma}\gamma _{w,j}^{2}}{U_w}-w+1\geq 1
\label{eq6_averageA_w+1 definish}
\end{equation}
where the inequality is obvious by sense; also note that
\begin{align}\label{eq6_>=1}
\sum\limits_{j=1}^{T_{w}^\Sigma}\gamma _{w,j}^{2}\ge\sum\limits_{j=1}^{T_{w}^\Sigma}\gamma _{w,j}=wU_w.
\end{align}

We denote a lower estimate of $\Delta _{w}^{\text{aver}}(\K_{w})$,
see Lemma \ref{lem6_lower bound aver} below, as follows:
\begin{align}
& \Delta _{w}^{\text{rigor}}(\K_{w}):=\max \left\{1,\frac{wU_{w}}{
\theta_{N-1,q}+1-w}-w+1\right\}=  \label{eq6_estimate} \\
& =\left\{
\begin{array}{ccc}
\frac{wU_{w}}{\theta_{N-1,q}+1-w}-w+1 & \mbox{ if } & U_{w}\ge \theta_{N-1,q}+1-w,\smallskip  \\
1 & \mbox{ if } & U_{w}<\theta_{N-1,q}+1-w.
\end{array}
\right.   \notag
\end{align}

\begin{lemma}
\label{lem6_lower bound aver} For any $w$-cap $\K
_{w}\in \mathbf{S}(U_{w})$,  the following  holds:
\begin{itemize}
  \item This   inequality always fulfills
\begin{equation}
\Delta _{w}^{\text{\emph{aver}}}(\K_{w})\geq
 \Delta _{w}^{\text{\emph{rigor}}}(\K_{w}).  \label{eq6_lower bound}
\end{equation}
  \item
In \eqref{eq6_lower bound}, we have the equality
\begin{equation}\label{eq6_lower bound_equality}
\Delta _{w}^{\text{\emph{aver}}}(\K_{w})=
\Delta _{w}^{\text{\emph{rigor}}}(\K_{w})=\frac{wU_{w}}{\theta_{N-1,q}+1-w}-w+1
\end{equation}
if and only if every unisecant contains the same number $\frac{U_{w}}{\theta_{N-1,q}+1-w}$ of uncovered points where $\frac{U_{w}}{\theta_{N-1,q}+1-w}$ is integer.
  \item
In \eqref{eq6_lower bound}, the equality
\begin{equation}\label{eq6_lower bound_equality_1}
\Delta _{w}^{\text{\emph{aver}}}(\K_{w})=\Delta _{w}^{\text{\emph{rigor}}}(\K_{w})=1
\end{equation}
holds if and only if each unisecant contains at most an one uncovered
point.
\end{itemize}
\end{lemma}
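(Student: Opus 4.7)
The plan is to run the identity
\begin{equation*}
\Delta_w^{\text{aver}}(\K_w) \;=\; \frac{\sum_{j=1}^{T_w^\Sigma}\gamma_{w,j}^2}{U_w} \;-\; w \;+\; 1,
\end{equation*}
established just above the lemma in \eqref{eq6_averageA_w+1 definish}, against two different lower bounds on $\sum_j \gamma_{w,j}^2$, one giving each branch of the $\max$ in the definition \eqref{eq6_estimate} of $\Delta_w^{\text{rigor}}(\K_w)$.

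For the first branch, I would apply the Cauchy--Schwarz inequality (equivalently the QM-AM inequality) to the $T_w^\Sigma$ non-negative integers $\gamma_{w,1},\dots,\gamma_{w,T_w^\Sigma}$, using the count $T_w^\Sigma=w(\theta_{N-1,q}+1-w)$ from \eqref{eq6_total unisecants} and the total $\sum_j\gamma_{w,j}=wU_w$ from \eqref{eq6_Gamma}. This yields
\begin{equation*}
\sum_{j=1}^{T_w^\Sigma}\gamma_{w,j}^2 \;\ge\; \frac{\bigl(\sum_j\gamma_{w,j}\bigr)^2}{T_w^\Sigma} \;=\; \frac{(wU_w)^2}{w(\theta_{N-1,q}+1-w)} \;=\; \frac{wU_w^2}{\theta_{N-1,q}+1-w},
\end{equation*}
and plugging this into the identity gives $\Delta_w^{\text{aver}}(\K_w)\ge \frac{wU_w}{\theta_{N-1,q}+1-w}-w+1$. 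For the other branch, I would just invoke \eqref{eq6_>=1} directly, which gives $\sum_j\gamma_{w,j}^2\ge wU_w$ and hence $\Delta_w^{\text{aver}}(\K_w)\ge 1$. Taking the maximum of the two bounds is exactly $\Delta_w^{\text{rigor}}(\K_w)$, proving \eqref{eq6_lower bound}.

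For the equality characterizations I would argue as follows. The Cauchy--Schwarz step is tight if and only if all $\gamma_{w,j}$ are equal; under the constraint $\sum_j\gamma_{w,j}=wU_w$, that common value is forced to be $\gamma_w^{\text{aver}}=U_w/(\theta_{N-1,q}+1-w)$, which must then be a non-negative integer. This gives \eqref{eq6_lower bound_equality}. For \eqref{eq6_lower bound_equality_1}, the inequality \eqref{eq6_>=1} is an equality exactly when $\gamma_{w,j}^2=\gamma_{w,j}$ for every $j$, and since the $\gamma_{w,j}$ are non-negative integers, this is equivalent to $\gamma_{w,j}\in\{0,1\}$ for every $j$, i.e.\ every unisecant carries at most one uncovered point.

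The argument is essentially a one-line application of Cauchy--Schwarz, so there is no deep obstacle. The only thing requiring a little care is the bookkeeping between the two branches: when $U_w<\theta_{N-1,q}+1-w$, the Cauchy--Schwarz bound on $\sum_j\gamma_{w,j}^2$ actually falls below $wU_w$, so the formal expression $\frac{wU_w}{\theta_{N-1,q}+1-w}-w+1$ becomes $<1$ and the trivial lower bound coming from \eqref{eq6_>=1} takes over; the $\max$ in \eqref{eq6_estimate} is precisely designed to absorb this case. Once that observation is recorded, all three bullets of the lemma follow immediately.
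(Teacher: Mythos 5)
Your proof is correct and follows essentially the same route as the paper's: the Cauchy--Schwarz--Bunyakovsky inequality applied to the $\gamma_{w,j}$ yields the branch $\frac{wU_w}{\theta_{N-1,q}+1-w}-w+1$ with equality iff all $\gamma_{w,j}$ coincide (forcing the common value $U_w/(\theta_{N-1,q}+1-w)$ to be an integer), while the integrality bound $\gamma_{w,j}^2\ge\gamma_{w,j}$ gives the branch $1$ with equality iff every $\gamma_{w,j}\in\{0,1\}$. Your bookkeeping of how the two branches combine under the $\max$ in \eqref{eq6_estimate} is, if anything, a bit more explicit than the paper's.
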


\begin{proof}
By Cauchy--Schwarz--Bunyakovsky inequality, it holds that
\begin{equation}
\left( \sum\limits_{j=1}^{T_{w}^{\Sigma }}\gamma _{w,j}\right) ^{2}\leq
T_{w}^{\Sigma }\sum\limits_{j=1}^{T_{w}^{\Sigma }}\gamma _{w,j}^{2}
\label{eq6_CSBineq}
\end{equation}
where equality holds if and only if all $\gamma _{w,j}$ coincide. In this case $\gamma _{w,j}=\frac{U_{w}}{\theta_{N-1,q}+1-w}$ for all $j$ and, moreover, the ratio $\frac{U_{w}}{\theta_{N-1,q}+1-w}$ is integer. Now, by
\eqref{eq6_total unisecants}, \eqref{eq6_Gamma}, we have
\begin{equation*}
\frac{wU_{w}}{\theta_{N-1,q}+1-w}\leq \frac{\sum\limits_{j=1}^{T_{w}^{
\Sigma }}\gamma _{w,j}^{2}}{U_{w}}
\end{equation*}
that together with \eqref{eq6_Gamma}, \eqref{eq6_averageA_w+1 definish}, \eqref{eq6_>=1}, \eqref{eq6_estimate} gives
\eqref{eq6_lower bound}--\eqref{eq6_lower bound_equality_1}.
\end{proof}

\begin{remark}
\label{rem6_averunisecant} One can treat the estimate \eqref{eq6_lower bound},
\eqref{eq6_lower bound_equality} as
follows. A bundle contains $w$ unisecants having a common point, its head.
Therefore the average number of uncovered points in a bundle is
$w\gamma_{w}^{\text{aver}}-(w-1)$ where $\gamma_{w}^{\text{aver}}$ is defined in \eqref{eq6_aver unisecants} and the term $w-1$ takes into account the common point.
\end{remark}

     It is clear that for any $w$-cap $\K
_{w}\in \mathbf{S}(U_{w})$ we have
\begin{equation}\label{eq6_max>=aver}
    \max\limits_{A_{w+1}}\Delta _{w}(A_{w+1})\ge\left\lceil\Delta _{w}^{\text{aver}}(\K_{w})\right\rceil.
\end{equation}

\begin{corollary}\label{cor6_1} It hold that
    \begin{equation*}
    \max\limits_{A_{w+1}}\Delta _{w}(A_{w+1})\ge\max \left\{1,\left\lceil\frac{wU_{w}}{
\theta_{N-1,q}+1-w}-w+1\right\rceil\right\}.
\end{equation*}
\end{corollary}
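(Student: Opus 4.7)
The plan is to chain together the two facts already established: inequality \eqref{eq6_max>=aver} and Lemma \ref{lem6_lower bound aver}, and then pass the ceiling through the outer maximum.

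First, I would observe that $\Delta_w(A_{w+1})$ takes only nonnegative integer values, so for any $\K_w \in \mathbf{S}(U_w)$ the maximum of $\Delta_w(A_{w+1})$ over the $U_w$ choices of uncovered head $A_{w+1}$ is itself an integer that is at least the average of those values. This is precisely the content of \eqref{eq6_max>=aver}: $\max_{A_{w+1}} \Delta_w(A_{w+1}) \geq \lceil \Delta_w^{\text{aver}}(\K_w)\rceil$. Then, applying Lemma \ref{lem6_lower bound aver} to bound the average from below by $\Delta_w^{\text{rigor}}(\K_w)$ and using that $\lceil \cdot \rceil$ is monotone, I obtain $\max_{A_{w+1}} \Delta_w(A_{w+1}) \geq \lceil \Delta_w^{\text{rigor}}(\K_w)\rceil$.

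It remains to rewrite the right-hand side in the form stated in the corollary. By definition \eqref{eq6_estimate}, $\Delta_w^{\text{rigor}}(\K_w) = \max\{1, \frac{wU_w}{\theta_{N-1,q}+1-w} - w + 1\}$. Since $\lceil 1 \rceil = 1$ and the ceiling function commutes with a binary maximum taken against an integer constant, we have
\begin{equation*}
\left\lceil \max\left\{1, \tfrac{wU_w}{\theta_{N-1,q}+1-w} - w + 1\right\}\right\rceil = \max\left\{1, \left\lceil \tfrac{wU_w}{\theta_{N-1,q}+1-w} - w + 1\right\rceil\right\},
\end{equation*}
which yields the claimed inequality.

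There is no real obstacle here; this is essentially a bookkeeping statement. The only thing one has to be careful about is the interaction between the outer $\max\{1, \cdot\}$ in the definition of $\Delta_w^{\text{rigor}}$ and the ceiling, which is handled by the identity above. Everything else is an immediate combination of \eqref{eq6_max>=aver} and Lemma \ref{lem6_lower bound aver}.
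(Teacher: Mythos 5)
Your proof is correct and follows essentially the same route the paper intends: the corollary is presented there as an immediate consequence of inequality \eqref{eq6_max>=aver} and Lemma \ref{lem6_lower bound aver} together with the definition \eqref{eq6_estimate}, which is exactly the chain you assemble. Your explicit justification of \eqref{eq6_max>=aver} via integrality of $\Delta_w(A_{w+1})$ and of the identity $\lceil\max\{1,x\}\rceil=\max\{1,\lceil x\rceil\}$ only makes the bookkeeping more careful than the paper's.
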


\begin{remark}\label{rem6_incident}
The results and approaches, connected with estimates of line-point incidences (see e.g. \cite{MurPetr2016,MurPetrShkr2017} and the references therein) could be useful for estimates and bounds considered in this paper.
\end{remark}

Let $D\ge1$ be a constant independent of $q$. Throughout the paper we denote
\begin{align*}
&\Phi_{w,q}(D)=\frac{D(w-1)\theta_{N,q}(\theta_{N-1,q}+1-w)}{Dw\theta_{N,q}-(\theta_{N-1,q}+1-w)(w(q-1)+1)},\\
&\Upsilon_{w,q}(D)=\frac{D\theta_{N,q}}{w(q-1)+1}.
\end{align*}

\begin{lemma}
\label{lem6_phi_psi} Let $D\ge1$ be a constant independent of $q$. Let an one of the following two conditions hold:
   \begin{align*}
U _{w}\ge \Phi_{w,q}(D),\quad  \Upsilon_{w,q}(D)\ge U _{w}.
\end{align*}
Then, for any cap $\K_{w}$ of $\mathbf{S}(U_{w})$, it holds that
\begin{align*}
\Delta _{w}^{\text{\emph{aver}}}(\K_{w})\geq \frac{\mathbf{E}_{w,q}}{D}.
\end{align*}
\end{lemma}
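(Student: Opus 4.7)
The plan is to apply Lemma \ref{lem6_lower bound aver} and treat the two hypotheses separately, using the two branches of $\Delta_{w}^{\text{rigor}}(\K_{w})=\max\{1,\,\frac{wU_{w}}{\theta_{N-1,q}+1-w}-w+1\}$. Both inequalities $\Delta_{w}^{\text{aver}}(\K_{w})\ge 1$ and $\Delta_{w}^{\text{aver}}(\K_{w})\ge \frac{wU_{w}}{\theta_{N-1,q}+1-w}-w+1$ hold unconditionally for every $\K_{w}\in \mathbf{S}(U_{w})$, so it suffices to show that each of these two lower bounds dominates $\mathbf{E}_{w,q}/D$ under the corresponding hypothesis of the lemma.

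Under the hypothesis $\Upsilon_{w,q}(D)\ge U_{w}$, i.e.\ $U_{w}\le D\theta_{N,q}/(w(q-1)+1)$, the argument is essentially a direct unfolding of the definition: $\mathbf{E}_{w,q}/D=(w(q-1)+1)U_{w}/(D\theta_{N,q})\le 1\le \Delta_{w}^{\text{aver}}(\K_{w})$, where the first inequality is the hypothesis multiplied through and the second comes from the trivial branch of Lemma \ref{lem6_lower bound aver}.

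Under the hypothesis $U_{w}\ge \Phi_{w,q}(D)$, I would start from the target inequality $\frac{wU_{w}}{\theta_{N-1,q}+1-w}-w+1\ge \frac{(w(q-1)+1)U_{w}}{D\theta_{N,q}}$ and clear the positive denominator $D\theta_{N,q}(\theta_{N-1,q}+1-w)$ without dividing, obtaining the equivalent cross-multiplied form
\[
U_{w}\bigl(Dw\theta_{N,q}-(\theta_{N-1,q}+1-w)(w(q-1)+1)\bigr)\ge D(w-1)(\theta_{N-1,q}+1-w)\theta_{N,q}.
\]
The right-hand side is non-negative, and the coefficient of $U_{w}$ on the left is precisely the denominator of $\Phi_{w,q}(D)$; when that coefficient is positive, the inequality is equivalent to $U_{w}\ge \Phi_{w,q}(D)$, which is the hypothesis, and the formula branch of Lemma \ref{lem6_lower bound aver} then gives the conclusion.

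The one delicate point, and really the only obstacle beyond routine algebra, is the sign of $Dw\theta_{N,q}-(\theta_{N-1,q}+1-w)(w(q-1)+1)$. Treating $\Phi_{w,q}(D)$ as a genuine lower-bound threshold on $U_{w}$ implicitly restricts attention to the regime where this denominator is positive; in the complementary regime the formula branch of $\Delta_{w}^{\text{rigor}}$ falls short of $\mathbf{E}_{w,q}/D$ only when the latter is itself at most $1$, which is exactly what the second hypothesis $U_{w}\le \Upsilon_{w,q}(D)$ captures. Hence the two hypotheses together cover every parameter range in which the desired bound on $\Delta_{w}^{\text{aver}}(\K_{w})$ can be forced from $\Delta_{w}^{\text{rigor}}(\K_{w})$.
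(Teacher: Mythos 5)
Your proposal follows the paper's own proof almost exactly: both arguments pass through Lemma~\ref{lem6_lower bound aver}, dispose of the case $\Upsilon_{w,q}(D)\ge U_{w}$ by noting $\mathbf{E}_{w,q}/D\le 1\le \Delta_{w}^{\text{rigor}}(\K_{w})\le\Delta_{w}^{\text{aver}}(\K_{w})$, and dispose of the case $U_{w}\ge \Phi_{w,q}(D)$ by cross-multiplying to verify $\frac{wU_{w}}{\theta_{N-1,q}+1-w}-w+1-\frac{(w(q-1)+1)U_{w}}{D\theta_{N,q}}\ge 0$ (the paper compresses this into ``it is easy to see''). The one ``delicate point'' you flag, the sign of $Dw\theta_{N,q}-(\theta_{N-1,q}+1-w)(w(q-1)+1)$, is not actually an obstacle: for $w\ge 1$ one has $(\theta_{N-1,q}+1-w)(w(q-1)+1)\le\theta_{N-1,q}(w(q-1)+1)=w(q^{N}-1)+\theta_{N-1,q}$, and since $\theta_{N,q}=q^{N}+\theta_{N-1,q}$ this equals $w\theta_{N,q}-(w-1)\theta_{N-1,q}-w\le w\theta_{N,q}\le Dw\theta_{N,q}$, so the coefficient of $U_{w}$ in your cross-multiplied form is always positive and the equivalence with $U_{w}\ge\Phi_{w,q}(D)$ holds in every parameter range. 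This makes your final paragraph's fallback to the second hypothesis unnecessary --- which is just as well, since the lemma's assumption is a disjunction and the second condition need not be available when only the first one holds.
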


\begin{proof}
By  \eqref{eq6_estimate}, \eqref{eq6_lower bound}, we have
\begin{equation*}
\Delta_{w}^{\text{aver}}(\K_{w})\geq \Delta_{w}^{\text{rigor}}(\K_{w})
\ge\frac{wU_{w}}{\theta_{N-1,q}+1-w}-w+1.  \label{eq6__blow}
\end{equation*}
It is easy to see that under condition $U _{w}\ge \Phi_{w,q}(D)$ it holds that
\begin{equation*}
\frac{wU_{w}}{\theta_{N-1,q}+1-w}-w+1- \frac{(w(q-1)+1)U_{w}}{
D\theta_{N,q}}\ge 0.
\end{equation*}
If  $U _{w}\le \Upsilon_{w,q}
(D)$ then $\frac{\mathbf{E}_{w,q}}{D}\le1$. On the other side, by
\eqref{eq6_estimate}, \eqref{eq6_lower bound}, we always have
$\Delta_{w}^{\text{aver}}(\K_{w})\geq \Delta_{w}^{\text{rigor}}(\K_{w})\ge1.$
\end{proof}

From Lemmas \ref{lem6_lower bound aver} and \ref{lem6_phi_psi} we obtain the corollary.

\begin{corollary}\label{cor6}
Let $D\ge1$ be a constant independent of $q$. Let an one of the following two conditions hold:
   \begin{align*}
U _{w}\ge \Phi_{w,q}(D),\quad  \Upsilon_{w,q}(D)\ge U _{w}.
\end{align*}
 Then,
 for any cap $\K_{w}$ of $\mathbf{S}(U_{w})$, there exists
an uncovered point $A_{w+1}$ providing the inequality
\begin{align*}
\Delta _{w}(A_{w+1})\geq
\frac{\mathbf{E}_{w,q}}{D}=\frac{(w(q-1)+1)U_{w}}{
D\theta_{N,q}}.
\end{align*}
\end{corollary}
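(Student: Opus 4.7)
The plan is to chain the two preceding results. Corollary \ref{cor6} asserts, under either hypothesis $U_w \ge \Phi_{w,q}(D)$ or $\Upsilon_{w,q}(D) \ge U_w$, the existence of an uncovered point $A_{w+1}$ for which $\Delta_w(A_{w+1}) \ge \mathbf{E}_{w,q}/D$. Since Lemma \ref{lem6_phi_psi} already delivers the corresponding bound on the \emph{average} $\Delta_w^{\text{aver}}(\K_w)$ and Lemma \ref{lem6_lower bound aver} provides the necessary lower estimate underlying it, the only remaining content is the elementary averaging principle: in any finite set of nonnegative integers, the maximum is at least the average.

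Concretely, I would proceed in three short steps. First, invoke Lemma \ref{lem6_phi_psi} directly with the given $D$: under either of the two hypotheses on $U_w$, the lemma yields
\begin{equation*}
\Delta_w^{\text{aver}}(\K_w) \;\ge\; \frac{\mathbf{E}_{w,q}}{D} \;=\; \frac{(w(q-1)+1)U_w}{D\,\theta_{N,q}}
\end{equation*}
for any $\K_w \in \mathbf{S}(U_w)$. Second, recall the definition \eqref{eq6_averageA_w+1 definish} of $\Delta_w^{\text{aver}}(\K_w)$ as the arithmetic mean of the quantities $\Delta_w(A_{w+1})$ over the $U_w$ choices of uncovered head $A_{w+1}$. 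Third, apply the pigeonhole inequality \eqref{eq6_max>=aver}, namely $\max_{A_{w+1}} \Delta_w(A_{w+1}) \ge \lceil \Delta_w^{\text{aver}}(\K_w)\rceil$, to conclude that at least one uncovered point $A_{w+1}$ realizes a value of $\Delta_w$ that meets or exceeds $\mathbf{E}_{w,q}/D$, which is exactly the claim.

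There is essentially no obstacle here, because the substantive work has been absorbed into Lemmas \ref{lem6_lower bound aver} and \ref{lem6_phi_psi}: Lemma \ref{lem6_lower bound aver} used Cauchy--Schwarz on the multiplicities $\gamma_{w,j}$ to push $\Delta_w^{\text{aver}}$ above $\Delta_w^{\text{rigor}}$, and Lemma \ref{lem6_phi_psi} translated the two hypotheses on $U_w$ into the inequality $\Delta_w^{\text{rigor}}(\K_w) \ge \mathbf{E}_{w,q}/D$ (with the case $\Upsilon_{w,q}(D) \ge U_w$ forcing $\mathbf{E}_{w,q}/D \le 1$ and being handled by the trivial lower bound $\Delta_w^{\text{rigor}} \ge 1$). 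The corollary is thus a one-line consequence, and the proof need only state that the existence of an above-average point is automatic; no further estimate on the dispersion of $\Delta_w(A_{w+1})$ is required, precisely because we are content with the \emph{maximum} rather than a typical value.
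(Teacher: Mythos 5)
Your proposal is correct and follows essentially the same route as the paper: Lemma \ref{lem6_phi_psi} gives $\Delta_w^{\text{aver}}(\K_w)\ge \mathbf{E}_{w,q}/D$ under either hypothesis, and the paper's own proof then simply invokes the averaging principle (via \eqref{eq6_averageA_w+1 definish} and \eqref{eq6_max>=aver}) to produce a point $A_{w+1}$ with $\Delta_w(A_{w+1})\ge\Delta_w^{\text{aver}}(\K_w)$. Nothing is missing.
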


\begin{proof}
By the definition of the average value \eqref{eq6_averageA_w+1 definish},
always there is an uncovered point $A_{w+1}$ providing the inequality $
\Delta _{w}(A_{w+1})\geq \Delta _{w}^{\text{aver}}(\K_{w})$, see also~\eqref{eq6_max>=aver}.
\end{proof}

\section{On reasonableness of Conjecture \ref{Conj_main}}\label{sec_reason}

In this section we  show (by reflections, calculations and figures) that in the steps of the iterative process when the rigorous estimates give not good results, actually these estimates do not reflect the real situation effectively.

$\bullet$ In the first we will illustrate the following:  when the rigorous bound \eqref{eq6_estimate}--\eqref{eq6_lower bound} is smaller than the expectation $\mathbf{E}_{w,q}$, in fact, the average value $\Delta _{w}^{\text{aver}}(\K_{w})$ of \eqref{eq6_averageA_w+1 definish} is greater (and the maximum value $\max\limits_{A_{w+1}}\Delta
_{w}(A_{w+1})$ is essentially greater) than $\mathbf{E}_{w,q}$, see Fig. \ref{fig_reasonable}.

\begin{figure}[htbp]
\includegraphics[width=\textwidth]{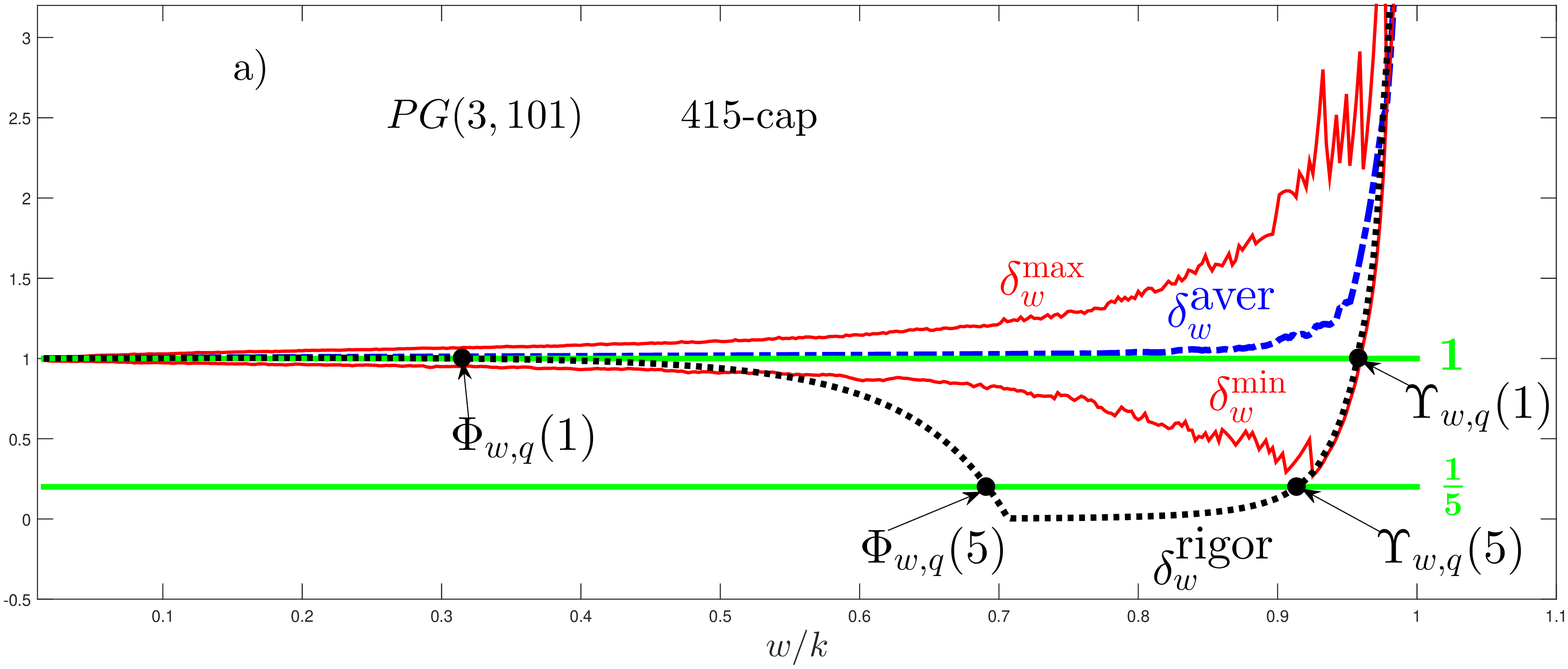}
\includegraphics[width=\textwidth]{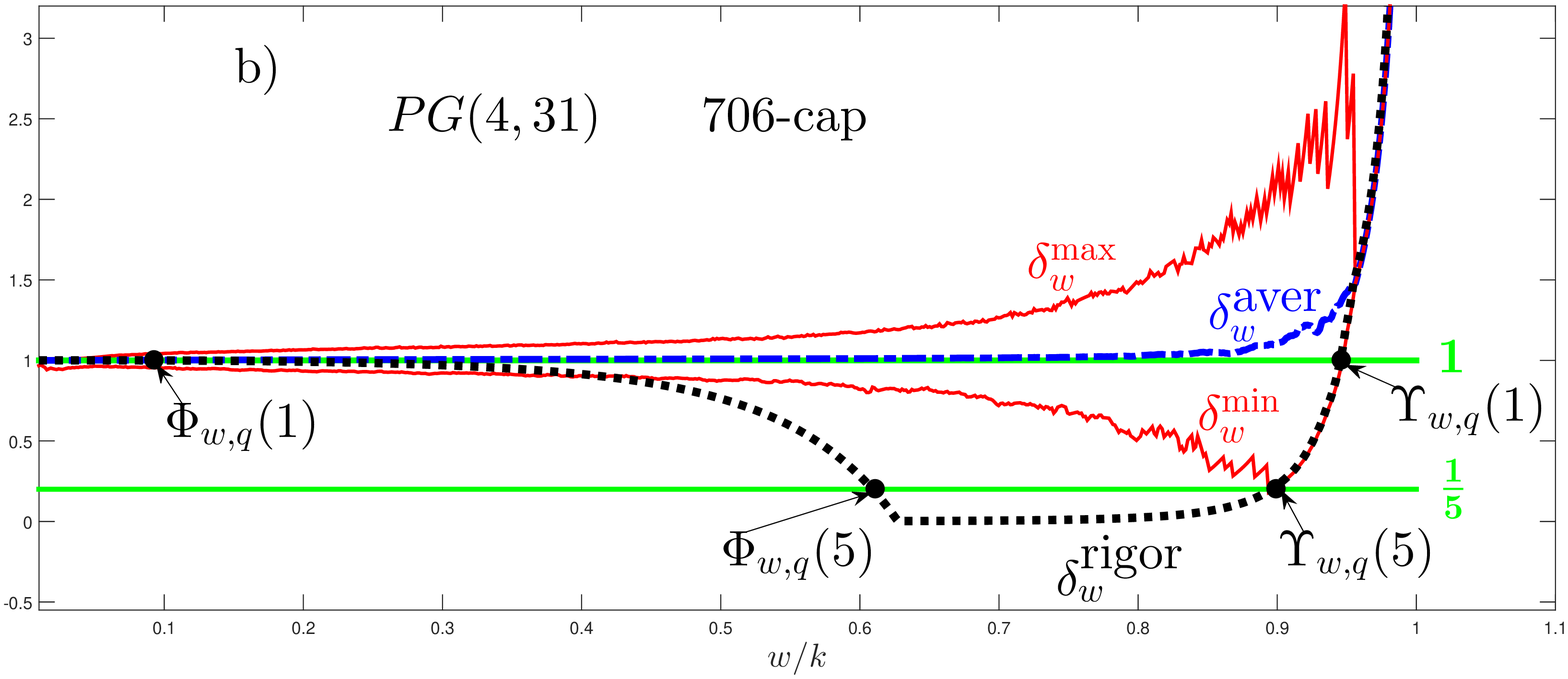}
\caption{\textbf{Illustration of reasonableness of Conjecture \ref{Conj_main}.}
Values $\delta^{\bullet}_{w}$ for a complete $k$-cap in $\PG(N,q)$. a) $N=3$, $q=101$, $k=415$;
b) $N=4$, $q=31$, $k=706$:
$\delta^{\max}_{w}$ (\emph{top solid red curve}),
$\delta^{\text{aver}}_{w}$ (\emph{the \emph{2}-nd dashed-dotted blue curve}),
$\delta^{\min}_{w}$ (\emph{the \emph{3}-rd solid red curve}),
$\delta^{\text{rigor}}_{w}$ (\emph{bottom dotted black curve}), \emph{green lines}
$y=1$ (for $D=1$) and $y=\frac{1}{5}$ (for $D=5$). The region where Conjecture \ref{Conj_main} is rigorously proved lies on the left
of $\Phi_{w,q}(D)$ and on the right of  $\Upsilon_{w,q}(D)$}
\label{fig_reasonable}
\end{figure}

We have calculated
the values $\Delta_{w}(A_{w+1})$, defined
in \eqref{eq2_Delta_w}, for
numerous concrete iterative processes  in $\PG(3,q)$ and  $\PG(4,q)$.
 It is important that \emph{for all the calculations
have been done}, it holds that
\begin{align*}
\max\limits_{A_{w+1}}\Delta
_{w}(A_{w+1})>\mathbf{E}_{w,q}.
\end{align*}
Moreover, the ratio $\max\limits_{A_{w+1}}\Delta
_{w}(A_{w+1})/\mathbf{E}_{w,q}$
has the increasing trend when $w$ grows. Thus, the variance of the
random value $\Delta_{w}$ helps to get good results.

The existence of points $A_{w+1}$ providing $\Delta _{w}(A_{w+1})>\mathbf{E}
_{w,q}$ is used by the greedy algorithms to obtain complete caps smaller
than the bounds following from Conjecture~\ref{Conj_main}.

 An
illustration of the aforesaid is shown on Fig.\,\ref{fig_reasonable}
where for complete $k$-caps in $\PG(3,101)$, $k=415$, and in $\PG(4,31)$, $k=706$,
obtained by the greedy algorithm, the values  \begin{align*}
&\delta^{\min}_{w}=\frac{\min\limits_{A_{w+1}}\Delta
_{w}(A_{w+1})}{\mathbf{E}_{w,q}},\quad \delta^{\max}_{w}=\frac{\max\limits_{A_{w+1}}\Delta
_{w}(A_{w+1})}{\mathbf{E}_{w,q}},\bigskip\bigskip\\
&\delta_{w}^{\text{aver}} =\frac{\Delta _{w}^{\text{aver}}(\K_{w})}{
\mathbf{E}_{w,q}},\quad \delta_{w}^{\text{rigor}}=\frac{\Delta _{w}^{\text{rigor}}(
\K_{w})}{\mathbf{E}_{w,q}},
\end{align*}
are presented.
The horizontal axis shows the values of $\frac{w}{k}$. The final region of the
iterative process when $U_{w}\leq \Upsilon_{w,q}(D)$ and
$\frac{\mathbf{E}_{w,q}}{D}\le1$ is shown not completely. The green lines $y=1$  and $y=\frac{1}{5}$
correspond, respectively, to Conjecture \ref{Conj_main}(ii), where $D=1$, and Conjecture~\ref{Conj_main}(i) with $D=5$. The signs~$\bullet$ correspond to the
values $\Phi_{w,q}(D)$ and $\Upsilon_{w,q}(D)$ with $D=1$ and $D=5$.
It is interesting (and expected) that, for   all the steps
of the iterative process,  we have
$\Delta_{w}^{\text{aver}}(\K_{w})>
\mathbf{E}_{w,q}$, i.e. $\delta^{\text{aver}}_{w}>1$.

 In Fig.\,\ref{fig_reasonable}, the region where we rigorously prove Conjecture \ref{Conj_main} lies on the left
of $\Phi_{w,q}(D)$ and on the right of  $\Upsilon_{w,q}(D)$. This region in $\PG(3,101)$ takes $\sim 35\%$ of the whole iterative process for
$D=1$ and $\sim 75\%$ for $D=5$.

Note that the forms of curves $\delta^{\max}_{w}$ and
$\delta^{\text{aver}}_{w}$ are similar for all $q$'s and $N$'s for which we
calculated these values.

$\bullet$ Now we consider \emph{the dispersion of the number of uncovered points on unisecants.}

 The lower estimate in \eqref{eq6_lower bound}
based on \eqref{eq6_CSBineq} is attained in two cases: either \emph{every unisecant contains the same
number of uncovered points} or \emph{each unisecant contains at most an one uncovered
point}.

The 1-st situation holds in the first steps of the
iterative process only. Then the differences $\gamma _{w,j}-\gamma
_{w,i}$ become nonzero. But, while the inequality $U_{w}(D)\geq  \Phi_{w,q}(D)$
holds, these differences are relatively small and estimate
\eqref{eq6_lower
bound} works \textquotedblleft well\textquotedblright . When $U_{w}$
decreases, the differences relatively increase, and the estimate becomes
worse in the sense that actually $\Delta _{w}^{\text{aver}}(\K_{w})$
is considerably greater than $\Delta _{w}^{\text{rigor}}(\K_{w})$.

The 2-nd situation is possible, in principle, when $U_{w}\le \theta_{N-1,q}+1-w$ and
the average number $\gamma_{w}^{\text{aver}}$  of uncovered points on an unisecant is smaller than one, see
\eqref{eq6_aver unisecants}. But on this stage of the iterative process
variations in the values $\gamma _{w,j}$ are relatively big; and
again  $\Delta _{w}^{\text{aver}}(\K_{w})$
is considerably greater than $\Delta _{w}^{\text{rigor}}(\K_{w})$.

In the final region of the iterative process, where $U _{w}\le \Upsilon_{w,q}(D)$
 and $\frac{\mathbf{E}_{w,q}}{D}\le1$, estimate
\eqref{eq6_lower bound} becomes reasonable once more. Thus, in the region
\begin{equation*}
\Phi_{w,q}(D)>U_{w}>\Upsilon_{w,q}(D)
\end{equation*}
the lower estimate \eqref{eq6_lower bound} does not reflect the real
situation effectively. This leads the necessity to formulate Conjecture \ref
{Conj_main} as a (plausible) hypothesis.

Let $\gamma_{w}^{\text{aver}}$ be defined in \eqref{eq6_aver unisecants}.
Let $\gamma_w^{\max}$ and $\gamma_w^{\min}$ be, respectively, the maximum and minimum of
the number $\gamma _{w,j}$ of uncovered points on an unisecant, i.e.
\begin{align*}
   \gamma_w^{\max}=\max_j\gamma _{w,j},\quad
   \gamma_w^{\min}=\min_j\gamma _{w,j}.
\end{align*}

An illustration of the fact that the numbers $\gamma _{w,j}$ of uncovered points on unisecants
lie in a relatively wide region is shown on Fig.\,\ref{fig_unisecant_gamma},
where for complete $k$-caps in $\PG(3,101)$, $k=415$, and in $\PG(4,31)$, $k = 706$,
obtained by the greedy algorithm, the values $\gamma_w^{\max}/\gamma_w^{aver}$ and
$\gamma_w^{\min}/\gamma_w^{aver}$ are presented.  The horizontal axis shows the values of $\frac{w}{k}$.
The such curves
were obtained for
numerous concrete iterative processes  in $\PG(3,q)$ and  $\PG(4,q)$.
 It is important that \emph{for all the calculations
have been done}, the forms of the curves are similar. Moreover, the value $\gamma_w^{\max}/\gamma_w^{aver}$
increases when the ratio $\frac{w}{k}$ grows; in the region $0.78<\frac{w}{k}<0.95$ (it is not shown in
Fig.\,\ref{fig_unisecant_gamma}); the value $\gamma_w^{\max}/\gamma_w^{aver}$ increases from $20$
to $590$ for the $415$-cap in $\PG(3,101)$ and from $36$ to $1400$ for the $706$-cap in $\PG(4,31)$.

\begin{figure}[htbp]
\includegraphics[width=\textwidth]{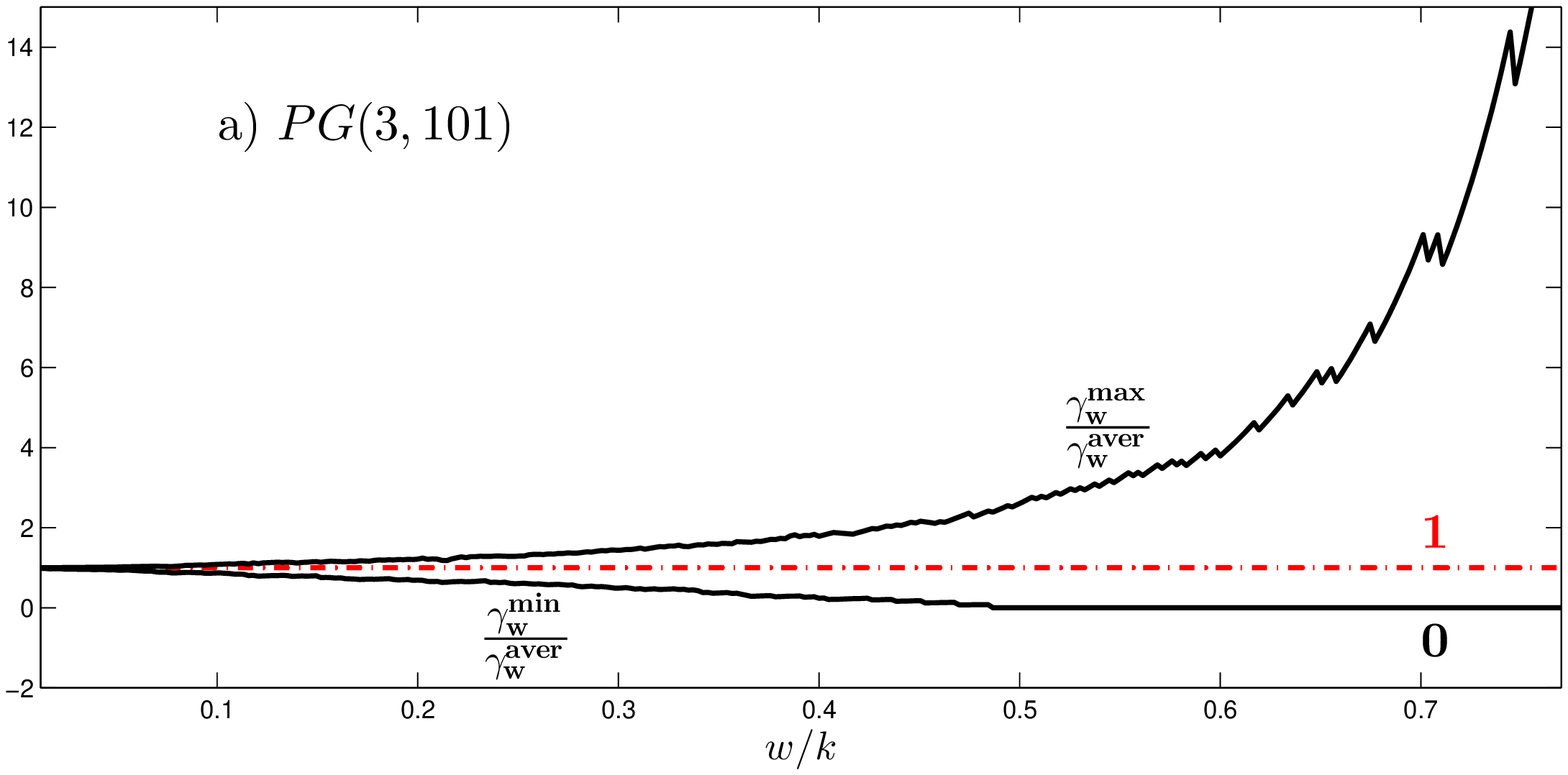}
\includegraphics[width=\textwidth]{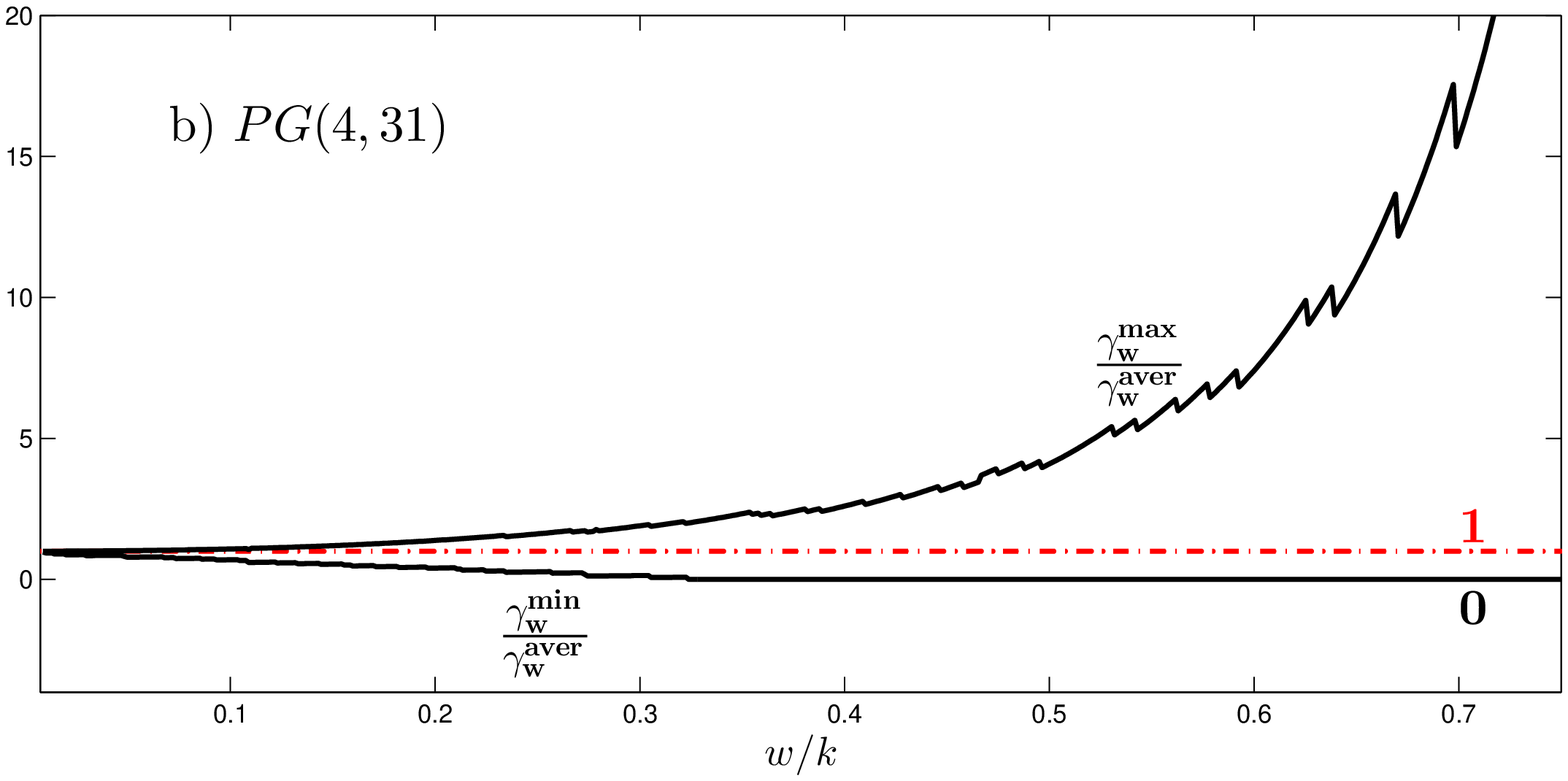}
\caption{\textbf{Dispersion of the number $\gamma _{w,j}$ of uncovered points on unisecants.}
Values $\gamma_w^{\max}/\gamma_w^{aver}$ (\emph{top solid black curve}) and
$\gamma_w^{\min}/\gamma_w^{aver}$ (\emph{bottom solid black curve}) and \emph{dashed-dotted red line} $y=1$
 for a complete $k$-cap in $\PG(N,q)$.  a) $N = 3$, $q = 101$, $k=415$; b)
$N = 4$, $q = 31$, $k = 706$}
\label{fig_unisecant_gamma}
\end{figure}

\begin{remark}\label{rem6_=->!=}
It can be proved rigorously (using Observation \ref{observ}) that if in some step of the
iterative process every unisecant contains the same
number of uncovered points then in the next step this situation does not hold.
\end{remark}

The calculations mentioned in this section and
Figs.\,\ref{fig_reasonable}, \ref{fig_unisecant_gamma}
illustrate the soundness of the key Conjecture \ref{Conj_main}.

\section{Conclusion}\label{sec_Conclus}

In the present paper, we make an attempt to obtain a theoretical upper bound on $t_2(N,q)$ with
the main term of the form $cq^\frac{N-1}{2}\sqrt{\ln q}$, where $c$ is a small constant independent of $q$. The bound is
based on explaining the mechanism of a step-by-step greedy algorithm for constructing complete
caps in $\PG(N,q)$ and on quantitative estimations of the algorithm. For a part of steps
of the iterative process, these estimations are proved rigorously. We make a natural (and wellfounded)
conjecture that they hold for other steps too. Under this conjecture we give new upper bounds on
$t_{2}(N,q)$ in the needed form, see \eqref{eq1_bnd_mainD}, \eqref{eq1_bnd_mainD=1}. We illustrate the effectiveness of the new bounds comparing them with the results of computer search from the papers \cite{BDKMP-arXivPG3q4q,BDKMP-PG3q4qENDM}, see Fig.\,\ref{fig_3_4q_1}.

We did not obtain a rigorous proof for
precisely the part of the process where the variance of the random variable $\Delta_w(A_{w+1})$ determining
the estimates implies the existence of points $A_{w+1}$ which are considerably better than what is
necessary for fulfillment of the conjecture (see the curve $\delta^{\max}_w$ in Fig.\,\ref{fig_reasonable}). In other words,
 in the steps of the iterative process when the rigorous estimates give not well results, in fact, these estimates do not reflect the real
situation effectively. The reason is that the rigorous estimates assume that the number of uncovered points on unisecants is the same for all unisecants.
However, in fact, there is a dispersion of the number of uncovered points on unisecants, see Section~\ref{sec_reason}. Moreover,  this dispersion
grows in the iterative process. So,
Conjecture \ref{Conj_main} seems to be reasonable.

\end{document}